\documentclass[preprint,12pt]{elsarticle}
  \usepackage[final,				%
	  		  stretch = 30,		%
			  shrink	=30,		%
			  tracking = true%
			]{microtype}
  \UseMicrotypeSet[protrusion]{basicmath}

  \usepackage{amssymb}
  \usepackage{amsmath}
  \usepackage{todonotes}
    \usepackage{hyperref} 
  \usepackage{amsthm}
  
  \newtheorem{definition}{Definition}[section]
  
  \newtheorem{remark}{Remark}[section]

  \newtheorem{prop}{Proposition}[section]
    
  \newtheorem{lemma}{Lemma}[section]

  \usepackage{mathtools}
  \mathtoolsset{showonlyrefs}
  \usepackage{colortbl}
  
\definecolor{sp1}{RGB}{153,204,51}
\definecolor{sp1D}{RGB}{111, 154, 26}
\definecolor{sp1DD}{RGB}{68,103,0}
\definecolor{sp1H}{RGB}{207,227,168}
\definecolor{sp1HH}{RGB}{230,243,205}

\definecolor{sp2}{RGB}{255,150,0}
\definecolor{sp2D}{RGB}{196, 98, 0}
\definecolor{sp2DD}{RGB}{136,45,0}
\definecolor{sp2H}{RGB}{255,204,153}
\definecolor{sp2HH}{RGB}{255,242,214}

\definecolor{sp3}{RGB}{0,186,219}
\definecolor{sp3D}{RGB}{0, 132, 161} 
\definecolor{sp3DD}{RGB}{0,77,103}
\definecolor{sp3H}{RGB}{171,227,236}
\definecolor{sp3HH}{RGB}{214,241,246}

\definecolor{WtalPink}{RGB}{213, 0, 109}%
\definecolor{WtalPinkD}{RGB}{180, 0, 99}%
\definecolor{WtalPinkDD}{RGB}{144, 0, 79}
\definecolor{WtalPinkH}{RGB}{255, 93, 153}%
\definecolor{WtalPinkHH}{RGB}{255, 139, 179}%

\definecolor{W}{RGB}{161, 118, 178}
\definecolor{WtalLavendel}{RGB}{161, 118, 178}
\definecolor{WtalLavendelD}{RGB}{127, 78, 156}
\definecolor{WtalLavendelDD}{RGB}{102, 62, 125}
\definecolor{WtalLavendelH}{RGB}{205, 164, 222}
\definecolor{WtalLavendelHH}{RGB}{226, 200, 240}%

\definecolor{sps}{RGB}{201,218,228}
\definecolor{WtalGrauBlau}{RGB}{124,151,176}
\definecolor{WtalGrauBlauD}{RGB}{71,87,102}
\definecolor{WtalGrauBlauDD}{RGB}{57, 70, 82}
\definecolor{WtalGrauBlauH}{RGB}{201,218,228}
\definecolor{WtalGrauBlauHH}{RGB}{233,242,247}

\definecolor{strasse}{RGB}{174, 174, 190}
\definecolor{strasseLinie}{RGB}{255, 255, 255}

  \usepackage{caption}
  \usepackage{subcaption}

  \usepackage{tikz}
  \usepackage{xcolor}
  \DeclareMathOperator{\WSoperator}{WS}
  
  \DeclareMathOperator{\Rectangle}{Rect}
  \DeclareMathOperator{\unif}{DiscreteUniform}
  \DeclareMathOperator{\HVoperator}{HV}
  \DeclareMathOperator{\HVRoperator}{HVR}
  \DeclareMathOperator{\PROPoperator}{PROP}
  \usepackage{tabularx}
  \usepackage{booktabs}

\newif\iflong
\longfalse

  \newlength{\mywidth}%
  \makeatletter
    \newcommand\myfbox[2][\linewidth]{%
    \xdef\mysep{\dimexpr 1\dimexpr\f@size pt\relax\relax}%
    \setlength{\fboxsep}{\mysep}%
    \setlength{\mywidth}{\dimexpr#1-2\fboxsep-2\fboxrule\relax}%
    \noindent\fbox{\begin{minipage}[inner sep=0]{\mywidth}#2\end{minipage}}%
    }%
\makeatother

  \usepackage[linesnumbered,vlined,ruled,commentsnumbered]{algorithm2e}

  \journal{Preprint ArXiv}

\begin{document}

  \begin{frontmatter}

    \title{A multi–objective perspective on block–structured integer programs with one soft coupling constraint}

  \author[label1]{Mark Lyngesen}
  \affiliation[label1]{organization={Aarhus University, Department of Economics and Business Economics},
              addressline={Universitetsbyen 51},
              city={Aarhus C},
              postcode={DK-8000},
              country={Denmark}}
  \affiliation[core]{organization={Center for Research in Energy: Economics and Markets},
              addressline={Universitetsbyen 51},
              city={Aarhus C},
              postcode={DK-8000},
              country={Denmark}}
  \author[label2]{Kathrin Klamroth}
  \author[label2]{Britta Efkes}
  \author[label1,core]{Sune Lauth Gadegaard}
  \affiliation[label2]{organization={University of Wuppertal},
              city={Wuppertal},
              country={Germany}}

  \begin{abstract}

  This paper presents a multi-objective perspective on block-structured integer 
programs featuring a single soft coupling constraint. By interpreting the 
coupling constraint as a second objective, we transform the coupled single-objective 
problem into an additively-separable bi-objective optimization problem. 
To avoid the expensive computation of the full Pareto front, we introduce an algorithm, which uses a binary search to isolate a region of interest around the soft constraint limit. This algorithm provides provable bounds on the single-objective optimum. 
We further enhance this algorithm, by exploiting the block-structure, using a novel $\lambda$-lookup mechanism to skip repeated sub-problem calculations. Finally, for scenarios requiring all non-dominated solutions within the region of interest, we propose a new approach, that works its way from the middle of the region of interest outwards. This algorithm shows quick convergence in terms of representation.
Computational studies demonstrate that our methods dramatically reduce integer programming calls, thereby outperforming traditional dichotomic search. For large instances the method works as a strong heuristic providing bounds on the gap to an optimal solution, providing trade-off information in addition to the solution.
  \end{abstract}

  \begin{keyword}
  optimization \sep block-structure \sep region-of-interest \sep multi-objectivization

  \end{keyword}

  \end{frontmatter}

  \newcommand{\Y}{\mathcal{Y}} %
  \newcommand{\Yl}{\mathcal{Y}_\lambda} %
  \newcommand{\Ylh}{\mathcal{Y}_{\hat\lambda}} %
  \newcommand{\fl}{f_\lambda} %
  \newcommand{\Yn}{\mathcal{Y}_{\texttt{N}}} %
  \newcommand{\nondom}[1]{\left(#1\right)_{\texttt{N}}} %
  \newcommand{\Yr}{\mathcal{Y}_{R}} %
  \newcommand{\hYr}{\hat{\mathcal{Y}}_{R}} %
  \newcommand{\Yse}{\mathcal{Y}_{\texttt{SE}}} %
   \newcommand{\Ys}{\mathcal{Y}_{\texttt{S}}} %
  \newcommand{\Xe}{\mathcal{X}_{\texttt{E}}} %
  \newcommand{\Xse}{\mathcal{X}_{\texttt{SE}}} %
  \newcommand{\Xs}{\mathcal{X}_{\texttt{S}}} %
  \newcommand{\Ysup}{\mathcal{Y}_{\texttt{SUP}}} %
  \newcommand{\X}{\mathcal{X}} %
  \renewcommand{\S}{\mathcal{S}} %
  \newcommand{\wsd}{\tilde{\mathcal{W}}^0} %
  
  \newcommand{\N}{\texttt{N}} %
  \newcommand{\WS}[1]{\WSoperator(#1)}
  \newcommand{\WSs}[2]{\WSoperator(#1)\^{#2}}
  \newcommand{\aWS}[1]{\arg\WSoperator(#1)}
  \newcommand{\aWSs}[2]{\arg\WSoperator(#1)\^#2}
    \newcommand{\lfrac}[2]{(#1)/(#2)} %

  \renewcommand{\^}[1]{^{(#1)}} %
  \newcommand{\yb}{\bar{y}} %

  \newcommand{\Rpp}{\mathbb{R}^p_\geqq} %
  \newcommand{\R}{\mathbb{R}} %
  \newcommand{\Z}{\mathbb{Z}} %

  \newcommand{\msum}{\bigoplus} %

  \newcommand{\nd}{ND\@\xspace} %
  \newcommand{\ND}{nondominated\@\xspace} %
  \newcommand{\MS}{Minkowski sum\@\xspace} %
  \newcommand{\SP}{sub-problem\@\xspace} %
  \newcommand{\eg}{e.g.\@\xspace} %
  \newcommand{\eeg}{E.g.\@\xspace} %
  \newcommand{\wrt}{wrt.\@\xspace} %
  \newcommand{\ie}{i.e.\@\xspace} %
  \newcommand{\BB}{branch-and-bound\@\xspace} %

  \newcommand{\bsip}{BSIP\@\xspace}
  \newcommand{\softconstraint}{soft constraint\@\xspace}
  \newcommand{\nfold}{$N-$fold\@\xspace}
  \newcommand{\BSIP}{Block structured integer programs\@\xspace}
  \newcommand{\roi}{ROI\@\xspace}
  \newcommand{\ROI}{Region of Interest\@\xspace}
  \newcommand{\mo}{MO\@\xspace}
  \newcommand{\MO}{multi-objective\@\xspace}
  \newcommand{\dm}{DM\@\xspace}
  \newcommand{\DM}{decision maker\@\xspace}
  \newcommand{\RHS}{right-hand-side\@\xspace}
  \newcommand{\LHS}{left-hand-side\@\xspace}

  \newcommand{\phaseone}{\texttt{Phase-1 method}\@\xspace}
  \newcommand{\findRoi}{\texttt{Find ROI}\@\xspace}
  \newcommand{\findRoiDec}{\texttt{Find ROI Decomposed}\@\xspace}
  \newcommand{\findRoiDecLook}{\texttt{Find ROI Decomposed Lookup}\@\xspace}
  \newcommand{\LookupLambda}{\texttt{LookupLambda}$(\Lambda\^s,\lambda)$\@\xspace}
\newcommand{\solveRoi}{\texttt{Solve ROI}\@\xspace}
  \renewcommand{\l}{\lambda} %

\pagebreak
\section{Introduction}
  Many optimisation problems addressing real world problems are expressed using \emph{hard} constraints. By that we mean constraints that \emph{must} be satisfied by every potential solution. However, it is often so that constraints are not \emph{hard} and that they can be violated by some amount, if the resulting solution quality is sufficiently improved by allowing the violation. This leads to the notion of \emph{soft} constraints that \emph{ideally} should be satisfied, but may be violated if doing so is sufficiently beneficial. In case of soft constraints, it is important for the decision maker to understand the trade-off's between the violation and the solution quality.

  In this paper, we study a particular kind of optimisation problem with a single, soft constraint, namely block structured integer programming problems with a single soft, coupling constraint. The general idea behind this study is to exploit the problem structure by treating the soft coupling constraint as an additional objective function instead of as a constraint. This allows us to utilise the block structure of the problem by decomposing it into several independent sub-problems. We then use multi-objective techniques to analyse the trade-offs between the original objective function and the soft coupling constraint, and to eventually provide an optimal solution along with valuable trade-off information for the decision maker.  
  
  In the remainder of this section we will review the related literature and emphasise the contributions of the paper.

  Block-structured integer programs often occur in integer and combinatorial optimisation. There are several types of block-structures and we refer to \citet{chen2019block} for an overview of many of these. %
  In this paper we consider constraint matrices with an \emph{\nfold} block structure. These are matrices consisting of blocks of sub-matrices on the diagonal and additional coupling constraints. As in \citet{eisenbrand2018faster} %
  we consider a \emph{generalised} \nfold matrix where the sub-matrices/blocks are not necessarily identical, and similarly, the blocks defining the coupling constraints are allowed to vary.
  Problems with an \nfold block structure have applications in several areas (see \citet{knop2020,chen2019block,cslovjecsek2025}) and contain the structure used to model a variety of integer optimization problems. 

  Many other approaches utilize block structured constraint matrices by decomposing the problem into master- and sub-problems. Two such especially well studied approaches are Benders' decomposition \citep{benders1962partitioning} and Dantzig-Wolfe decomposition \citep{desrosiers2024branch,dantzig1960decomposition}. We also refer the interested reader to the textbook \cite{martin2012large} for a unified view on the two decomposition approaches.
  
  As mentioned above, this paper considers IPs with an \nfold block structured matrix with a single coupling constraints. Such structures occur when optimizing over multiple independent subsystems (not to be confused with independence systems). The coupling constraint could refer to a limited resource shared among the subsystems, for example a decision maker wanting to maximize overall profit for a set of activities each consuming some amount of CO\textsubscript{2}, subject to a maximum bound on emissions. We propose a \emph{multi-objectivization} approach, were the soft constraint is turned into a second objective function to be optimized together with the original objective function of the problem.

  The term multi-objectivization refers to the general idea of solving single-objective optimization problems using multi-objective optimization methods. The paper \citet{segura2016using} reviews the concept of multi-objectivization for evolutionary algorithms. The review concludes that the methods generally perform \emph{worse} compared to single-objective solvers. However, the authors note that one can use \MO evolutionary algorithms to maintain diversity of solutions. There are also instances where multi-objectivization schemes outperform single-objective schemes, see again \citet{segura2016using}. The reformulation into multi-objective problems in some cases allows transforming a single-objective constrained problem to a multi-objective unconstrained problem — or at least into a problem with fewer constraints. There are also applications of multi-objectivization techniques in multiplicative programming \citep{shao2014objective,shao2016primal} where the single-objective problem is solved as a multi-objective problem, and in optimization under uncertainty where each scenario is used to define an individual objective \cite{klamroth2013uncertain}.

  Another use of multi-objectivization for multi-dimensional knapsack problems is that of \citet{schulze2017soft} (see also \citet{schulze2017new}), where the authors interpret a soft constraint as an objective function. By interpreting the \LHS of the constraint as an objective, the problem is turned into a bi-objective problem. The new problem is then solved using a dynamic programming algorithm, and the second objective in turn provides the decision maker with a variety of \emph{interesting} solutions \emph{around} the original right-hand-side of the soft constraint. 
  
  When modelling multi-objective problems that result, e.g., from re-interpreting a soft constraint as an additional objective function, one can make use of preference information of the decision maker in several ways. This process is referred to as preference-driven multi-objective optimization \cite{wang2017}. See \citet{rostami2017progressive} for a discussion of the incorporation of decision maker preferences into multi-objective problems. An \emph{a priori} approach would be for the decision maker to provide enough information on preferences to construct a problem with a single unique optimal value. Such information could be substitution weights between objectives, or a target value vector where the closest non-dominated point is chosen. In the typical \emph{a posteriori} approach seen in multi-objective research, a decision maker is presented with all non-dominated alternatives between objectives. %
  \citet{rostami2017progressive} considers a hybrid between these two approaches, in which some preference information is incorporated into the problem, where the decision maker is interested in a set of \emph{interesting} non-dominated solutions.
  The \emph{\ROI} (\roi) of a multi-objective problem is then the area of the objective space in which a decision-maker is interested in knowing all non-dominated solutions.  
  There are different approaches for defining a \roi of a \MO problem in the literature, with the two main goals of reducing the search area and only providing the decision maker with relevant trade-off information:
  \citet{rostami2017progressive} define a \roi using a point dominated by an \emph{a priori} known \emph{preference vector}, and propose evolutionary algorithms, which find multiple solutions in the \roi. \citet{yu2025towards} define several \roi's from a set of search directions in the objective space (one for each decision maker). Then, each decision maker seeks solutions in an \roi \emph{around} their search direction,  which in turn defines several regions of interest. \citet{zhou2023efficient} define regions of interest using a reference point. %

  An intuitive way of defining the \roi, is to look for Pareto optimal solutions \emph{around} the soft constraint limit. That is, solutions for which the left-hand-side of the soft constraint is just over, and just under the limit $W$. The decision maker is allowed to provide a parameter $\gamma \in [0,1]$ defining the size of the \roi, with $1$ being the entire Pareto front and $0$ being the smallest part of the Pareto front containing supported points around the soft constraint limit. See \autoref{fig:visual-ROI} for a visualization of the \roi for different levels of $\gamma$. %
  In \citet{schulze2017new} the \roi for $\gamma=0$ is defined as the rectangle with corner points defined by the two extreme supported points $y^+, y^-$ immediately above and below the soft constraint value $W$.%

  \begin{figure}[h!]
    \begin{center}
      \includegraphics[width=0.8\textwidth]{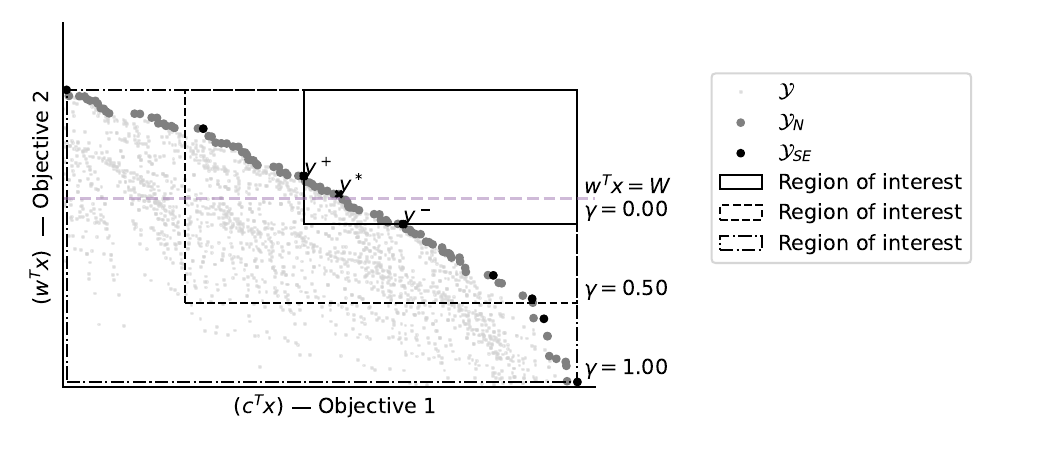}
    \end{center}

    \caption{A visualisation of the region of interest for different levels of $\gamma \in [0,1]$
    }\label{fig:visual-ROI}
  \end{figure}

  As we will see, the problem resulting from the multi-objectivization can be characterised as an additively-separable multi-objective problem. Such problems are decomposable and have been studied in \citet{Gardenghi2011} and \citet{Kerberenes2022phd}. This present paper seeks to provide solution methods for a class of additively separable multi-objective problems, requiring only solutions in a specified \roi.

  The contribution of this paper is four-fold:
  \begin{enumerate}
      \item We propose to solve soft constrained \nfold block structured integer programs using multi-objectivization.
      \item We develop a new approach for computing the region of interest, utilising the decomposable structure of the problem.
      \item For providing trade-off information around the soft constraint in the region of interest, we develop an ``alternating direction''-enhanced $\epsilon$-constraint approach for solving the bi-objective problem inside the region of interest.
      \item Through extensive computational results we analyse the effectiveness of our approaches.
  \end{enumerate}
  
  The remainder of the paper is organised as follows:
\autoref{sec:prereq} presents preliminaries  on \MO optimization concepts and solution methods, as well as a full formulation of the block-structured integer program (\bsip) considered in this paper.
  The theoretical contributions of the paper are presented in \autoref{sec:method}: First we explain the methodology of the paper, how we multi-objectivize the \bsip problem, and how this leads to decomposing the problem.
  In \autoref{sec:theory-upper-bound} we provide bounds on the number of extreme supported points in the multi-objective counterpart problem. %
  \autoref{sec:theory-find-roi} presents an algorithm for finding the two points defining the \ROI, along with a way of calculating the optimality gap of the resulting solutions. In \autoref{sec:theory-solve-roi} we present a decomposition algorithm for finding all points in the \ROI.
  Finally, in \autoref{sec:comp-study} we present a computational study on a testset of \bsip instances.

\section{Preliminaries}\label{sec:prereq}

  \subsection{Block-structured integer programming problems}
  We consider single objective block-structured integer programming problems (\bsip) which are IPs with a block-structured constraint matrix. We assume blocks are coupled by a single coupling constraint. In particular, the global problem can be described as an \nfold IP where each block may be different from one another as in \citet{eisenbrand2018faster}.

  Let $A\in \Z^{m\times n}$ be an \nfold block matrix with $S$ blocks, indexed by the set $\S =\{1, \ldots, S\}$, such that block $A\^{s}$ consists of $m\^s$ rows and $n\^s$ columns for each $s \in \S$, and let $b\in\Z^m$. Throughout this paper, we shall assume that the set $\X\coloneq\{x \in \Z^n \mid Ax\le b\}$ is bounded. This is true for any IP with bounds on the integer variables. Moreover, $wx \ge W$ (where $wx$ denotes the inner product of $w$ and $x$) denotes the coupling constraint, with $w\in\Z^n$ and $W\in\Z$. 
  The general formulation of a \emph{block-structured integer programming problem} \eqref{eq:bs-ip-formulation} with a single \emph{soft constraint} is given by
  \begin{align}
  \tag{BSIP}\label{eq:bs-ip-formulation}
  \qquad \qquad \max\;  & cx \\
  \text{s.t. }& Ax \leqq b, &\\
        & wx \ge W, & \text{(soft constraint)}
        \\
        & x \in \Z^n. & 
  \end{align}
  We assume throughout this paper that problem \eqref{eq:bs-ip-formulation} is feasible. 
  Let $\X^*$ denote the set of optimal solutions for problem \eqref{eq:bs-ip-formulation}. For any $x^* \in \X^*$ the optimal objective value is $cx^*$ while the value of the soft constraint is $wx^*$. Ideally, $wx^*$ is as large as possible.

    To get an intuitive understanding of the problem we investigate, we may represent the problem in the standard form 
    as follows:
    \begin{equation}
      \max \{cx \mid \bar{A}x \leqq \bar{b}, x\in \mathbb{Z}^n\}
    \end{equation}
    where $\bar{A}$ and $\bar{b}$ are obtained by appending $-w$ as the last row of $A$ and $-W$ to $b$. A visualization of this is presented in \autoref{table:visual-instances}.

    \begin{figure}
        \begin{center}
            \begin{tabular}{cccccccc}
                           & & & &&&\\
                    \cellcolor{white} $\max$ & \cellcolor{white}$ \cellcolor{sp1}c^{(1)}$ & \cellcolor{sp2} $c^{(2)}$ & \cellcolor{sps}$\cdots$ & \cellcolor{sp3} $c^{(S)}$&& \\
                    \midrule
                    s.t.& $A^{(1)}$ \cellcolor{sp1}&&  &&& $b^{(1)}$ \cellcolor{sp1}\\
                        & & $A^{(2)}$ \cellcolor{sp2}& &&& $b^{(2)}$ \cellcolor{sp2}\\
                           & & & \cellcolor{sps}$\ddots$ &&& $\vdots$ \cellcolor{sps}\\
                           & & &  &$A^{(S)}$ \cellcolor{sp3}&&$b^{(S)} $ \cellcolor{sp3}\\
                            \cellcolor{white} & \cellcolor{sp1} $-w^{(1)}$ & \cellcolor{sp2}$-w^{(2)}$ & \cellcolor{sps}$\cdots$ & \cellcolor{sp3}$-w^{(S)}$ & & $-W$ \cellcolor{W}\\
                    \midrule
            \end{tabular}\caption{Illustration showing the \nfold block structure of \eqref{eq:bs-ip-formulation} with $N=S$ blocks.
            }\label{table:visual-instances}
                \end{center}
    \end{figure}

  \subsection{Multi-objective optimization}

    In this study, we use the binary relations $<$, $\leqq$, and $\leq$ when comparing vectors in $\mathbb{R}^p$. For two vectors $y^1,y^2\in\mathbb{R}^p$ we have
  \begin{align}
      y^1<y^2&\Leftrightarrow y^1_k<y^2_k,\text{ for }k=1,...,p\\
      y^1\leqq y^2&\Leftrightarrow y^1_k\leq y^2_k,\text{ for }k=1,...,p\\
      y^1\leq  y^2&\Leftrightarrow y^1\leqq y^2 \text{ and }y^1\neq y^2 .
  \end{align}
  Furthermore, for two vectors $y^1$ and $y^2$ in $\mathbb{R}^p$ we define the lexicographic order as follows: if $y^1\neq y^2$ let $k^*=\min\{k\mid y_k^1\neq y_k^2\}$. We then say that $y^1\leq_{\text{lex}}y^2$ if $y^1=y^2$ or $y^1_{k^*}<y^2_{k^*}$. Furthermore, we say that $y^1<_{\text{lex}}y^2$ if $y^1_{k^*}<y^2_{k^*}$.
  
  For a multi-objective optimisation problem of the form
  \begin{equation}  
    \max\{ Cx: Ax\leqq b, x\in \Z^n\} \label{eq:mult_opt}
  \end{equation}
  where $C\in\Z^{p\times n}$, $A\in\Z^{m\times n}$, $b\in\Z^m$, and $\X=\{x\in\Z^n\mid Ax\leqq b\}$, we say that a solution, $\bar{x}\in\X$ is an \emph{efficient solution} if there is no other feasible solution $x\in\X$ such that $Cx\ge C\bar x$. The set of all efficient solutions will be denoted by $\Xe$. The image of an efficient solution $\bar x\in\Xe$, $y=C\bar x$, is called a \emph{non-dominated outcome vector} (or \emph{non-dominated point}). The set of all non-dominated outcome vectors is denoted by $\Yn$.

  We will further divide the set of efficient solutions and non-dominated outcomes into those that are supported and those that are unsupported. Following the notation in \citet{ehrgott2005multicriteria} (see also \citet{koenen2025supported} for a more detailed analysis that is particularly relevant when $p>2$), we say that $\hat{x}\in\Xe$ is a \emph{supported efficient solution} if there exists some $\tilde\lambda\in\mathbb{R}^p$, with $\tilde\lambda>0$, such that $\hat{x}$ is an optimal solution to the weighted sum problem $\max\{\tilde\lambda Cx\mid x\in\mathcal{X}\}$.
  The set of all supported efficient solutions and their corresponding supported non-dominated vectors are denoted $\Xs$ and $\Ys$, respectively. We will later focus on bi-objective problems, \ie, the case that $p=2$. Then, solutions in $\Xe\setminus\Xs$ are called \emph{unsupported efficient solutions} and their images are \emph{unsupported non-dominated outcome vectors}.

  It is well known that solving weighted sum scalarisations of the multi-objective optimisation problem is guaranteed to lead to supported efficient solutions provided the weight vector is strictly positive. In addition, by varying the weight vector, all supported efficient solutions can be found (see, \eg, \cite{ehrgott2005multicriteria}).
  
  Instead of considering $p-$dimensional weight vectors $\tilde\lambda \in \R^p$ we shall consider weight vectors $\lambda$ in the $(p-1)-$dimensional \emph{normalised} weight set 
  $\wsd \coloneq\{\lambda\in\mathbb{R}^{p-1}\mid\lambda>0,\sum_{l=1}^{p-1}\lambda_l<1\}$. Note that this is simply the interval $\wsd=(0,1)$ in the bi-objective case. 
  
  For the remainder of this paper we consider problems with $p=2$, and for any $\lambda \in \wsd$ we define the corresponding (normalised) weighted sum problem as:
    \begin{equation}
    \tag{$WS(\lambda)$}
  \max\{\lambda(C_1x) + (1-\lambda)(C_2x) : Ax\leqq b, x\in\Z^n\},%
    \label{prob:WS}
  \end{equation}
  where we always assume that the feasible set $\X$ is non-empty and bounded. 
  Here, $C_i$ denotes the $i$'th row in $C$. %
  In this case, the set of supported efficient solutions is discrete and finite, and we may write $\Xs=\{\hat{x}\^1,...,\hat{x}\^L\}$ and set $\hat y\^l=C\hat x\^l$, for $l=1,...,L$.
  The weight set $\wsd$ can be decomposed into subsets $\wsd(\hat{y}\^1),...,\wsd(\hat{y}\^L)$, such that for each $l =1,\ldots, L$, $\hat{x}\^l$ is an optimal solution to \eqref{prob:WS} for any $\lambda \in \wsd(\hat{y}\^l)$.
  We say that $\{\wsd(\hat{y}\^l)\}_{l=1}^L$ is a \emph{weight set decomposition}, and note that $\bigcup_{l=1}^L\wsd(\hat{y}\^l) = (0,1)$.

  Note that some of the \emph{weight cells} $\wsd(\hat{y}\^l)$ may have dimension $0$, \ie, $\hat{x}\^l$ is optimal for exactly one weighting vector $\lambda\in\wsd$. In the following, we will focus on weight cells $\wsd(\hat{y}\^l)$ that have dimension $1$ and call the associated non-dominated outcome vectors \emph{extreme supported}. Their corresponding extreme supported non-dominated vectors are denoted $\Xse$ and $\Yse$, respectively. 
  We refer to \citet{Przybylski2010} for a more detailed description of the weight set decomposition.

  \subsubsection{Computing a weight set decomposition}
  For multi-objective optimisation problems with an arbitrary number of objectives it is generally not a trivial task to compute a weight set decomposition. The interested reader is referred to \citet{benson2002weight, przybylski2010recursive, bokler2015output} and \citet{halffmann2020inner} for a thorough treatment of the computation of the weight set decomposition for weighted sum scalarisations and to \citet{helfrich2024weighted} for the analysis of weight set decompositions for general weighted $p$-norms.

  For the bi-objective case, where $p=2$, the weight set decomposition can be computed using \emph{dichotomic search}, originally, and independently, proposed by \citet{cohon2013multiobjective} and \citet{aneja1979bicriteria}. The method is also referred to as the ``Phase-1 method'', as it is often used in localisation methods for bi-objective integer programs, as a first phase for finding the extreme supported non-dominated outcome vectors. The dichotomic search method starts by computing the two lexicographic optima. We say that a feasible solution $x^*$ is lexicographically optimal if there does not exist another feasible solution $x'$ such that $Cx^*<_{\text{lex}}Cx'$. The normal to the line connecting the images of the two lexicographic optima in objective space is then used as a weight vector for the weighted sum problem. If the weighted sum problem deems the lexicographic solutions optimal, the search stops. Otherwise, a new supported efficient solution is found. The directions defined by the normals between the newly found point and the existing images are then searched in a similar manner. This continues until no new extreme supported solutions are found. The procedure can be summarized as in \autoref{ALG:PhaseOne}.  %
  If \eqref{prob:WS} has several optimal solutions for a given weight $\lambda$ (\ie extreme and non-extreme supported solutions), then a solver could return any of these. To ensure that only the required extreme supported solutions are returned, we make use of \autoref{remark:onlyExtremeLambda}.

\begin{remark}\label{remark:onlyExtremeLambda}  By adding a sufficiently small positive value to each tested weight $\lambda$ in \autoref{ALG:PhaseOne}, we can avoid finding non-extreme supported non-dominated outcome vectors. Indeed, non-extreme supported non-dominated outcome vectors need to lie on lines with specific slopes since all outcome vectors are integral, and we make sure such slopes are avoided. 
\end{remark}

    \begin{algorithm}[h!]
            \KwData{Constraint matrix $A$, vector $b$, and cost-matrix $C = \begin{pmatrix}
                C_1 \\ C_2
            \end{pmatrix}$.}%
            \KwOut{Sets $\Yse$ and $\Xse$}
            \BlankLine
            \tcc{Compute lex-max solutions}
            $x^{lr}\in\arg\mbox{lex}\max\{(C_2x,C_1x)\mid x\in\mathcal{X}\}$\;
            $x^{lr}\in\arg\mbox{lex}\max\{(C_1x,C_2x)\mid x\in\mathcal{X}\}$\;%
            $y^{ul}\gets(C_1x^{ul},C_2x^{ul})$, $y^{lr}\gets(C_1x^{lr},C_2x^{lr})$\;
            \If{$y^{ul}=y^{lr}$}{
                \Return $\{y^{lr}\}$, $\{x^{lr}\}$\tcc*{Only one non-dominated point}
            }
            \BlankLine
            \tcc{Initialize sets and pointers}
            $y^+ \gets y^{ul}$ and $y^- \gets y^{lr}$\;
            $\Yse\gets \{y^{ul},y^{lr}\}$\tcc*{Initialise $\Yse$}
            $\Xse\gets \{y^{ul},x^{lr}\}$\tcc*{Initialise set of pre-images}
            \BlankLine
            \tcc{Enter main loop}
            \While{$y^+\neq y^{lr}$}{
                $\lambda\gets \tfrac{y_2^+-y_2^-}{(y_1^--y_1^+)+(y_2^+-y_2^-)}$\tcc*{Update weight parameter}
                $x^*\in\arg\max\{(\lambda C_1+(1-\lambda)C_2)x\mid x\in\mathcal{X}\}$\;
                $y^*= Cx^*$\;%
                \If{$(\lambda C_1+(1-\lambda)C_2)x^*>\lambda y^+_1+(1-\lambda)y_2^+$}{
                    \tcc{New solution found, update sets}
                    $\Xse\gets \Xse\cup\{x^*\}$\;
                    Insert $y^*$ into $\Yse$ between $y^+$ and $y^-$\;
                }
                \Else{
                    $y^+\gets y^-$\tcc*{No new solution found, move on}
                }
                Set $y^-$ equal to the point to the right of $y^+$ in $\Yse$
            }   
            
            \caption{\phaseone}\label{ALG:PhaseOne}
  \end{algorithm}
  \autoref{ALG:PhaseOne} works in the \emph{normalized} weight set, and computes the set of extreme supported outcome vectors. Hence, it implicitly also computes the weight set decomposition, \ie, the set of all weight cells of dimension $1$.

\section{Methodology}\label{sec:method}

  This section contains the theoretical contributions of the paper. 
  In Subsection~\ref{sec:bs-mo} we describe how the problem \eqref{eq:bs-ip-formulation} can be multi-objectivised into an additively separable multi-objective problem, which in turn can be decomposed into several sub-problems.
  In Subsection~\ref{sec:theory-upper-bound} we provide an upper bound on the number of extreme supported points defined by the number of extreme supported points in the sub-problems.
  Then, we describe the theoretical argument behind the so-called $\lambda$-lookup, and present an algorithm based on this in Subsection~\ref{sec:theory-find-roi}. Lastly, in Subsection~\ref{sec:theory-solve-roi} we present an algorithm for finding all points in the region of interest, an algorithm which we argue quickly converges towards the \emph{interesting} set.

\subsection{Multi-objectivization of block-structered problems}\label{sec:bs-mo}

  First we present a way of transforming the problem formulated in \eqref{eq:bs-ip-formulation} into an additively separable multi-objective problem shown in \eqref{prob:bs-ip-mo}:
  \begin{equation}
  \Yn = \max \{ (cx, wx) \mid Ax \leqq b, %
  x\in \mathbb{Z}^n\}.\tag{$P$} \label{prob:bs-ip-mo}
  \end{equation}
  This is achieved by treating the \LHS of the soft constraint $wx \ge W$ as a second objective to be maximized and ignoring the \RHS $W$ (In the following $W$ will be used to define the region-of-interest for the multi-objective problem). 
  
  For notational convenience let $\mathcal{X}\coloneqq\{x\in \mathbb{Z}^n \mid Ax \leqq b%
  \}$ denote the set of feasible solutions to \eqref{prob:bs-ip-mo}, and $\Y = C\X$, with $C = \begin{pmatrix}
                c \\ w
            \end{pmatrix}$.
  It is clear %
  that one can find an optimal solution $x^*$ to the single-objective problem \eqref{eq:bs-ip-formulation} among the efficient solutions $\Xe$ for multi-objective problem \eqref{prob:bs-ip-mo}. 

  So far the multi-objectivization step has only made the problem harder (possibly intractable) and the \RHS $W$ is no longer considered.
  However, as we will see, the resulting multi-objective problem is highly decomposable into additively separable multi-objective problems. %

  \newcommand{\makesingletable}[2]{%
          \color{#2}
          \begin{aligned}
                  \mathcal{Y}_N^{(#1)} = \max (cx^{(#1)}&, w x^{(#1)}) \\
                  s.t.\quad A^{(#1)} x^{(#1)} &\le b^{(#1)} \\
                                  x^{(#1)} &\in \mathbb{Z}^{n^{(#1)}}
          \end{aligned}
  }

  The resulting multi-objective problem decomposes into $S$ independent sub-problems \eqref{prob:SPs} indexed by the set $\S \coloneq \{1,\ldots, S\}$:
  \begin{align}\tag{SP$\^s$}\label{prob:SPs}
                  \Yn\^s = \max (C\^sx\^s) \\
                  s.t.\quad A\^s x\^s &\le b\^s \\
                                  x\^s &\in \mathbb{Z}^{n\^s}
  \end{align}
  where $C\^s = \begin{pmatrix}
                c\^s \\ w\^s
            \end{pmatrix}$ (see \autoref{table:visual-instances}).
  The feasible set in subproblem $s$ is denoted $\X\^s$ and the corresponding feasible sets in objective space is denoted by $\Yn\^s$. %
  
  From our notation, we see that $\X = \X\^1\times \cdots \times \X\^S$ and $\Y = \msum_{s \in \S} \Y\^s$. Here, `$\msum$` denotes the Minkowski sum operator, \ie $A\msum B \coloneq \{a+b\mid a\in A, b\in B\}$ and $\msum_{i=1}^nA\^i=(((A\^1\msum A\^2) \msum \cdots) \msum A\^n)$.
  It is well-known that $\Yn = \left(\msum_{s \in \S} \Yn\^s\right)_\N$ (see \citet{Gardenghi2011} for a proof).
  
  Hence, all feasible points for \eqref{prob:bs-ip-mo} can be calculated using the feasible points of the sub-problems \eqref{prob:SPs} and the set $\Yn$ can be calculated using only the \nd sets of the sub-problems.
  Analogous to \eqref{prob:SPs}, we define the (normalised) weighted sum problem of sub-problem $s$ %
  as the problem \eqref{prob:WS} with $A = A\^s, b= b\^s, c=c\^s$ and $x = x\^s$.

\subsection{Upper bound on the number of extreme-supported points}\label{sec:theory-upper-bound}

  In this subsection we provide a bound on the number of extreme supported points of $\Yse$ for \eqref{prob:bs-ip-mo}. From \citet[Prop. 3.3]{paper1} we know that any point of $\Yse$ is the sum of extreme supported points of the sub-problems. Therefore, one can bound the cardinality of $\Yse$ by $\prod_{s=1}^S |\Yse\^s|$. In the following we show that the cardinality of $\Yse$ can be bounded by the sum of the cardinality of extreme supported points over all sub-problems. The proof of this uses so-called critical weights. These are weights for which two extreme points are optimal solutions for \eqref{prob:WS}.

  \newcommand{\C}{\mathcal{C}}
  \begin{definition}
    Given a weight set decomposition $\wsd(\Yse) := \{\wsd(y) \mid y \in \Yse\}$, let $\C(\Yse) = (0,1)\setminus \bigcup_{y \in \Y} \operatorname{int}(\wsd(y)) =\{\lambda_1, \ldots, \lambda_k\}$ denote the set of critical weights. The sets $\operatorname{int}(\wsd(y))$ for $y \in \Yse$ are called the cells of the weight set decomposition.
  \end{definition}

  By definition, there is a one-to-one correspondence between the cells of $\wsd(\Yse)$ and the extreme supported points $\Yse$. For any $y \in \Yse$ there exists an interval $(l,u) \in \wsd$ such that $y$ is the unique optimal solution to \eqref{prob:WS} for any $\lambda \in (l,u)$. Apart from $0$ and $1$ the endpoints of these intervals correspond to the critical weights $\C(\Yse)$. Since the intervals of $\wsd(\Yse)$ are mutually exclusive and collectively exhaustive of $(0,1)$ the total number of critical weights is exactly $|\Yse|-1$ as remarked in \autoref{remark:WsdCellCount}.
  \begin{remark}\label{remark:WsdCellCount} The analysis in \citet{Przybylski2010} immediately implies that $|\Yse| = |\C(\Yse)| + 1$ in the biobjective case. Indeed, $|\C(\Yse)|$ critical weights subdivide the one dimensional weight set $(0,1)$ into $|\C(\Yse)|+1$ weight cells of dimension 1, each of which defines one (unique) extreme supported point by Proposition~4 in \cite{Przybylski2010}.
  \end{remark}

  \begin{remark}[Proposition 3.4 in \citet{paper1}] For any finite subset $\Y\subset \R^2$ and $\lambda>0$, let $\Yl=\arg\max\{\lambda y_1 + (1-\lambda) y_2 : y\in\Y\}$ and define $\Yl\^s$ similarly. Then $|\Yl|=1 \iff |\Yl\^s|=1, \forall s\in \S$.
    \label{remark:YlandYls}
  \end{remark}
  The bound is based on the observation that a weight is critical for $\Yse$ if and only if it is critical for $\Yse\^s$ for some $s\in \S$. \autoref{fig:proof_upper_bound_Yse} shows an example of a weight set decompositions and critical weights for an exemplary set $\Yse$ and for two sub-problem sets $\Yse\^1$ and $\Yse\^2$. Each depicted interval of dimension 1 corresponds to an extreme suported point and one can see how the number of intervals in the weight set decomposition of $\Yse$ can be derived from the weight set decompositions of the sub-problems $\Yse\^s$. We further analyse this in the following proposition.
  
  \begin{prop}\label{prop:boundOnYse}
    Let $\Y\^s \subseteq \mathbb{R}^2$ for $s \in \S$. Then $|\Yse| \le 1 - |\S| + \sum_{s \in \S} |\Yse\^s| $ and the inequality holds with equality if and only if $|\C(\Yse)| = \sum_{s \in \S} |\C(\Yse\^s)|$.
  \end{prop}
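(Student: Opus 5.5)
The plan is to reduce the statement about extreme supported points to a statement about critical weights, using \autoref{remark:WsdCellCount}, and then to compare the critical weights of $\Y$ with those of the sub-problems, using \autoref{remark:YlandYls}. By \autoref{remark:WsdCellCount}, $|\Yse| = |\C(\Yse)|+1$ and $|\Yse\^s| = |\C(\Yse\^s)|+1$ for every $s\in\S$. Substituting these identities, the claimed inequality
\begin{equation}
|\Yse| \le 1 - |\S| + \sum_{s\in\S}|\Yse\^s|
\end{equation}
becomes $|\C(\Yse)|+1 \le 1-|\S| + \sum_{s\in\S}\bigl(|\C(\Yse\^s)|+1\bigr)$. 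This is equivalent to
\begin{equation}
|\C(\Yse)| \le \sum_{s\in\S}|\C(\Yse\^s)|.
\end{equation}
Because every step is an identity, equality in the original inequality holds exactly when equality holds here. The ``if and only if'' part is therefore automatic once the reduced inequality is established.

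The reduced inequality will follow from the set identity
\begin{equation}
\C(\Yse) = \bigcup_{s\in\S}\C(\Yse\^s),
\end{equation}
since the cardinality of a union is at most the sum of the cardinalities. The first step is to characterise critical weights. A weight $\lambda\in(0,1)$ is critical for a finite set $\Y$ if and only if $|\Yl|\ge 2$, that is, if the weighted sum problem has at least two optimal outcome vectors.
\begin{itemize}
\item If $\lambda$ lies in the interior of some cell $\wsd(y)$, then $y$ is the unique maximiser by the definition of a cell.
\item Conversely, suppose $|\Yl|=1$. Because $\Y$ is finite, the unique maximiser stays strictly better than all other points for weights in a small open neighbourhood of $\lambda$. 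Hence $\lambda$ lies in the interior of that point's cell.
\end{itemize}
The same characterisation holds for each $\Y\^s$. It also suffices to work with $\Yse$ in place of $\Y$, because the maximisers of the weighted sum over $\Y$ include extreme supported points.

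With this characterisation, \autoref{remark:YlandYls} gives the set identity directly. Its negation reads $|\Yl|\ge 2$ if and only if $|\Yl\^s|\ge 2$ for some $s\in\S$. In other words, $\lambda$ is critical for the full problem exactly when it is critical for at least one sub-problem. This yields the reduced inequality and hence the bound.

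The main obstacle is the careful justification of this characterisation. In particular, I need to check that the paper's definition of critical weights via cell interiors agrees with ``at least two optimal outcome vectors''. Two points need attention: non-extreme supported points, and the possibility of zero-dimensional cells. Finiteness of the outcome sets, which holds because $\X$ is bounded, should make the continuity argument above routine. Finally, for the equality clause I should note that equality holds precisely when the sets $\C(\Yse\^s)$ are pairwise disjoint. This is consistent with the stated condition $|\C(\Yse)|=\sum_{s\in\S}|\C(\Yse\^s)|$, since that condition is exactly equality in the union bound.
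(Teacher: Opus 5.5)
Your proposal is correct and follows essentially the same route as the paper. The paper also obtains $\mathcal{C}(\mathcal{Y}_{\texttt{SE}})=\bigcup_{s\in\mathcal{S}}\mathcal{C}(\mathcal{Y}_{\texttt{SE}}^{(s)})$ from Remark~\ref{remark:YlandYls} by identifying critical weights with weights having more than one optimal outcome, then combines the union bound with Remark~\ref{remark:WsdCellCount}; your only addition is an explicit justification of that identification via finiteness and cell interiors, which the paper takes for granted.
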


  \begin{figure}[h!]
    \begin{center}
      \includegraphics[width=0.95\textwidth]{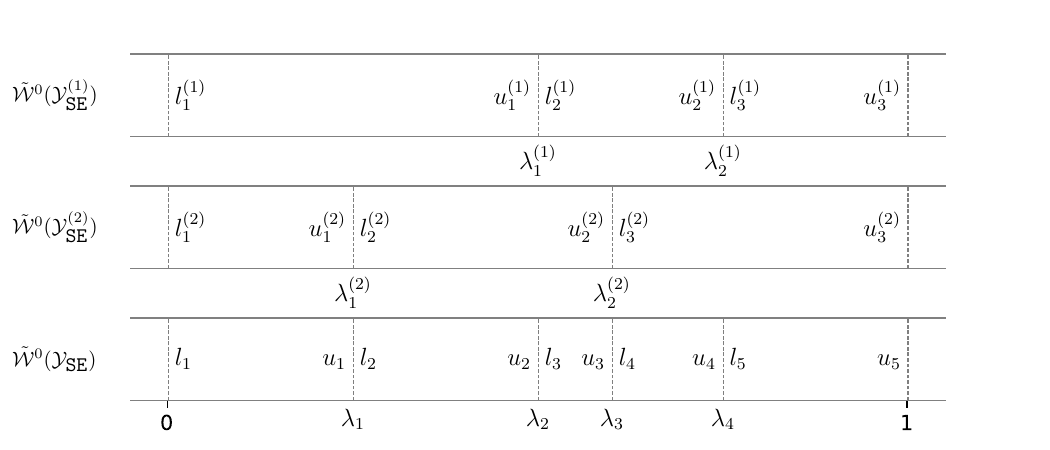}
    \end{center}
    \caption{A visualization of the weight set decompositions $\wsd(\cdot)$ of the sets $\Y\^1, \Y\^2$ and $\Y = \Y\^1 + \Y\^2$, where $|\Yse\^1| = 3, |\Yse\^2| = 3$ and $|\Yse| = 5$.}\label{fig:proof_upper_bound_Yse}  
  \end{figure}

  \begin{proof}
      Using Remark~\ref{remark:YlandYls}, we first show that the set of critical weights $\C(\Yse)$ of the weight set decomposition is equal to the union of the critical weights of the sub-problem weight set decompositions denoted by $\Gamma  \coloneq \bigcup_{s \in \S} \C(\Yse\^s)$.
    Indeed, $\hat{\lambda}\in\C(\Yse)$ if and only if $|\Y_{\hat{\lambda}}|>1$. By Remark~\ref{remark:YlandYls}, the latter is satisfied if and only if $|\Y\^s_{\hat{\lambda}}|>1$ for some $s\in\S$, which is equivalent to $\hat{\lambda}\in \Gamma$. This proves $\C(\Yse)=\Gamma$ and hence $|\C(\Yse)|\leq \sum_{s\in\S}|\C(\Yse\^s)|$. Combining this with Remark~\ref{remark:WsdCellCount} we have
    \begin{align}
        |\Yse| &= 1+|\C(\Yse)|  \\
             &\leq  1+\sum_{s \in \S}|\C(\Yse\^s)|  \\
             &= 1+(\sum_{s \in \S}(|\Yse\^s|-1)  \\
             &= 1 - |S| + \sum_{s \in \S}|\Yse\^s|, 
    \end{align}
    which holds with equality if and only if $|\C(\Yse)|=\sum_{s \in \S}|\C(\Yse\^s)|$.
  \end{proof}

    It is well known that the IP's are NP hard in general.
  IP's being NP-hard implies that multi-objective IP's are NP hard (as they require finding $|\Yn|$ solutions of problems which are NP-hard).
  Likewise the task of finding all extreme supported points is NP-hard as it requires solving at least $|\Yse|$ IP problems.
  The Dichotomic Search algorithm solves $2|\Yse|-1$ IP problems. Using \autoref{prop:boundOnYse} we can assert that the Dichotomic Search algorithm solves at most $2( 1 - |\S| + \sum_{s \in \S} |\Yse\^s|)-1$ IP problems and exactly that many if $|\C(\Yse)| = \sum_{s \in \S} |\C(\Yse\^s)|$. Since solving IP's is computationally expensive and since we are only interested in a subset of the extreme supported solutions we will present a set of algorithms which derive only the subset we are interested in. 

\subsection{Finding the Region of Interest}\label{sec:theory-find-roi}

  In this section we consider the task of determining the points $y^+$ and $y^-$ defining the \roi.
  The points $y^+$ and $y^-$ are defined as follows.
    \begin{align}
    y^+ = \arg\min_{y \in \Yse}\left\{y_2 \mid y_w \ge W\right\}, \quad
    y^- = \arg\max_{y \in \Yse}\left\{y_2 \mid y_w < W\right\} \label{def:yplus_yminus}
  \end{align}

  If $\Yse$ is known — after solving the \phaseone — then $y^+$ and $y^-$ are straight forward to compute by solving \eqref{def:yplus_yminus}.
  However, computing all extreme-supported points $\Yse$ is unnecessary, since one can determine $y^+$ and $y^-$ by calculating only a small subset of $\Yse$. 

  We propose the following modification of the \phaseone which derives $y^+$ and $y^-$ by performing a binary search of the weight set, to determine values $\l^+$ and $\l^-$ producing $y^+$ and $y^-$. The algorithm initiates by determining $y^{ul}$ and $y^{lr}$ (the lexicographical maximal solutions) and iteratively solving scalarized problems moving from the points $y^{ul}$ and $y^{lr}$ towards $y^+$ and $y^-$. The algorithm \findRoi is presented in \autoref{ALG:Finding_ROI}.
  
  In each iteration a solution $y$ is found by solving a weighted sum problem with weight $\lambda$ defined by two incumbant solutions $y^+$ and $y^-$ which are known to be above and below $W$, respectively.
  The weight $\lambda$ is defined such that the solutions corresponding to $y^+$ and $y^-$ have the same objective value in the weighted problem $P_\l$, \ie, $\l y^+_1 + (1-\l)y^+_2 = \l y^-_1 + (1-\l)y^-_2$.
  If instead $y \notin \{y^+,y^-\}$, then the point $y$ lies between $y^+$ and $y^-$. If $y$ is above $W$, \ie, $y_2 \ge W$ we set $y^+ \coloneq y$, otherwise we set $y^- \coloneq y$.
  If $y \in \{y^+,y^-\}$, then no point of $\Yse$ lies between $y^+$ and $y^-$, hence the points $y^+$ and $y^-$ returned by the algorithm solves \eqref{def:yplus_yminus}.

  \begin{algorithm}[h!]
            \KwData{Bi-objective problem $P$ and soft constraint right-hand-side $W$.}
            \KwOut{$y^+$ and $y^-$ satisfying \eqref{def:yplus_yminus}.}
            \BlankLine
            \tcc{Compute lex-max solutions}
            $x^{ul}\in\arg\mbox{lex}\max\{(C_2x,C_1x)\mid x\in\mathcal{X}\}$\;
            $x^{lr}\in\arg\mbox{lex}\max\{(C_1x,C_2x)\mid x\in\mathcal{X}\}$\;
            $y^{ul}\gets Cx^{ul}, y^{lr} \gets Cx^{lr}$\;%
            \If{$y^{ul}=y^{lr}$}{
                \Return $y^{lr}$, $y^{lr}$\tcc*{Only one non-dominated point}
            }
            \BlankLine
            \tcc{Initialize pointers}
            $y^+ \gets y^{ul}$ and $y^- \gets y^{lr}$\;
            $y^*=\texttt{null}$\;
            \While{$y^*\notin \{y^+,y^-\}$}{
                $\lambda \gets \frac{y^+_2-y^-_2}{y^-_1 - y^+_1 - y^-_2 + y^+_2}$\label{ALG:find_ROI_repeat}\;
                $x^*\in\arg\max\{(\lambda C_1+(1-\lambda)C_2)x\mid x\in\mathcal{X}\}$\;
                $y^*\gets Cx^*$\; %
                \uIf{$y_2^*\geq W$}{
                    $y^+\gets y^*$
                }
                \Else{
                    $y^-\gets y^*$
                }
            }
            \Return $y^+$, $y^-$
            \caption{ \findRoi }\label{ALG:Finding_ROI}
  \end{algorithm}

  \begin{prop}\label{prop:find-roi-runtime} Let $T(n,m,M)$ be the time complexity of an IP problem with $n$ variables, $m$ constraints and \(M\) being the largest entry of $A,b$ and $c$. If all cells in the weight set decomposition are the same size, then the worst-case running time of \findRoi is $(3+\log_{2}(|\Yse|))\cdot T(n,m,M)$.
  \end{prop}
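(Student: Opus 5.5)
The plan is to count IP calls in \findRoi and multiply by $T(n,m,M)$. I split the count into two parts. The first part is the fixed overhead: the two lexicographic maxima $x^{ul}$ and $x^{lr}$, and the final weighted sum problem that returns $y^*\in\{y^+,y^-\}$ and ends the loop. The second part is the number of loop iterations that find a new point. I will argue that the overhead costs at most three IP solves. Each lexicographic maximum can be computed as one weighted-sum IP with a suitably small perturbed weight, in the spirit of Remark~\ref{remark:onlyExtremeLambda}, with objective coefficients bounded in terms of $M$; then the lex solves and the terminating solve together give the additive constant $3$. If the lex solves are counted as two solves each, the constant would change, so I will state this accounting convention explicitly. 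The main task is to show that the number of productive iterations is at most $\log_2(|\Yse|)$.

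\textbf{Step 1: a loop invariant.} At every stage, $y^+$ and $y^-$ are extreme supported points with $y^+_2\ge W>y^-_2$. The weight $\lambda$ computed in line~\ref{ALG:find_ROI_repeat} is the normal of the segment $[y^+,y^-]$, and it lies in $(0,1)$ because $y^+_2>y^-_2$ and $y^-_1>y^+_1$. By Remark~\ref{remark:onlyExtremeLambda}, the returned $y^*$ is extreme supported. If $y^*\notin\{y^+,y^-\}$, then $y^*$ lies strictly above the segment and strictly between the two points in both coordinates, so it is one of the extreme supported points in between.

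\textbf{Step 2: the weight interval halves.} This is where the equal-cell-size assumption enters. Let $y^+=\hat y\^i$ and $y^-=\hat y\^j$ in the ordering of $\Yse$, and consider the interval of weights between the cells $\wsd(y^+)$ and $\wsd(y^-)$. It is the union of the $j-i-1$ cells in between. If all cells have equal length $1/|\Yse|$, the number of candidate points strictly between $y^+$ and $y^-$ is proportional to the length of this weight interval. The weight $\lambda$ is the critical weight at which $y^+$ and $y^-$ would tie if the intermediate points did not exist. I will try to show that under equal cells the point $y^*$ returned for $\lambda$ splits the remaining candidates roughly in half. Then, whichever of $y^+$ or $y^-$ is replaced, the number of remaining candidate cells is at most half the previous number.

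\textbf{Step 3: conclude.} Initially there are at most $|\Yse|$ cells. Each productive iteration halves the count, so after at most $\log_2(|\Yse|)$ productive iterations no candidate remains, and the next solve returns $y^*\in\{y^+,y^-\}$. The total is therefore at most $3+\log_2(|\Yse|)$ IP calls.

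\textbf{Main obstacle.} Step~2 is the hard part. The weight $\lambda$ defined by the chord is not in general the midpoint of the weight interval, so equal cell sizes alone do not obviously force an exact halving. My first approach is to express $\lambda$ as a weighted average of the intermediate critical weights, namely the slopes of consecutive edges, with weights given by the edge lengths. I would then argue that with equal cell widths it falls in the middle cell up to rounding. If that fails, I would read the hypothesis as "the algorithm's weight bisects the remaining interval", which is the intended idealization, and state it as such.
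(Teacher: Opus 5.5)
Your skeleton matches the paper's proof exactly: two lexicographic solves, one terminating solve, and at most $\log_2|\Yse|$ productive iterations. The paper closes your Step~2 only by asserting that equal cells mean ``exactly half of the points are excluded in each iteration.'' Your suspicion is justified. Your first route cannot succeed, because under the literal hypothesis the halving, and with it the proposition, is false.

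Your weighted-average formula is correct. Order $\Yse$ as $\hat y^1=y^{ul},\dots,\hat y^L=y^{lr}$ with $L=|\Yse|$. Write the edges as $\hat y^{k+1}-\hat y^k=(a_k,-b_k)$ with $a_k,b_k>0$, so the critical weights are $\lambda_k=b_k/(a_k+b_k)$. For $y^+=\hat y^i$ and $y^-=\hat y^j$ the weight used by \texttt{Find ROI} is
\[
\lambda=\frac{\sum_{k=i}^{j-1}b_k}{\sum_{k=i}^{j-1}(a_k+b_k)}=\frac{\sum_{k=i}^{j-1}(a_k+b_k)\,\lambda_k}{\sum_{k=i}^{j-1}(a_k+b_k)} .
\]
Equal cells means $\lambda_k=k/L$. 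This fixes only the edge directions, $(a_k,b_k)=t_k(L-k,k)$, and leaves the lengths $t_k>0$ completely free.

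Take $t_k=(2L)^{L-k}$. Then $t_i>\sum_{k\ge i+2}(k-i-1)t_k$, so for $j=L$ and $i\le L-2$ the weight lies in the open cell $(\lambda_i,\lambda_{i+1})$. The unique optimum there is $\hat y^{i+1}$, the immediate neighbour of $y^+$. Now choose $W$ with $\hat y^{L-1}_2\ge W>\hat y^L_2$; this instance is realized by an IP that selects one of $L$ binary variables. Every call then moves $y^+$ by a single step. The result is $L-2$ productive iterations and $L+1$ IP calls in total, not $3+\log_2 L$.

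So your fallback is not a matter of taste; it is the only way to get the bound. You must assume that the chord weight always lands in the median remaining cell, and this is exactly what the paper's one-line argument tacitly does.

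A clean sufficient condition under which your first route does go through is equal cells together with equal $\ell_1$ edge lengths (all $t_k$ equal). Then $\lambda=(\lambda_i+\lambda_{j-1})/2$, so of the $c=j-i-1$ candidates at most $\lfloor c/2\rfloor$ survive.

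Even then your Step~3, and the paper's count, need a ceiling. Starting from $c=L-2$ candidates gives $\lceil\log_2(L-1)\rceil$ productive iterations, which can exceed $\log_2 L$. For example, take $L=6$, $(a_k,b_k)=(6-k,k)$, $\hat y^1=(0,15)$ and $W=13$. With the positive perturbation of Remark~\ref{remark:onlyExtremeLambda}, the algorithm visits $\hat y^4$, $\hat y^3$, $\hat y^2$ and then terminates. That is $6>3+\log_2 6$ calls. The correct statement under the bisection idealization is $3+\lceil\log_2(|\Yse|-1)\rceil$ calls.

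Your caveat about the lexicographic solves is fair: a perturbed weighted-sum lex solve has coefficients exceeding $M$. The paper simply counts each lexicographic optimum as one IP call without comment.
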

  \begin{proof} %
  The algorithm always solves two IP's, one for each lex-max solution. Then, assuming all cells are the same size, the algorithm performs a binary search on the cells of the weight set. Since the cells are assumed to be all the same size, exactly half of the points are excluded in each iteration. Therefore, at most $\log_2(|\Yse|)$ iterations of the main loop are performed before $y^+$ and $y^-$ are correctly identified. A final IP call is made showing that there are no supported points between $y^+$ and $y^-$, resulting in a total of at most $(2+\log_2(|\Yse|)+1)$ IP calls.
   \end{proof}

 In \autoref{prop:find-roi-runtime} we showed a bound on the running time of \findRoi given the assumption that the cells of the weight set decomposition are all the same size. We are not suggesting that this assumption holds in general, but the theoretical bound is a good predictor of the expected number of IP calls, as we will see in the empirical study (see \autoref{fig:emp_IP_calls_Yse}). In general, the worst case performance of \findRoi coincides with that of the \phaseone, which requires $2|\Yse|-1$ IP calls.

    In the setting where the problem $P$ is additively-separable into sub-problems $\left\{ P\^s \right\}_{s=1}^S$ one can decompose the calculation of weighed sum solutions in \findRoi by applying the following lemma:

    \begin{lemma}[\citet{paper1}] Let $(\X,\Y, f)$ be decomposable into $(\X\^s, \Y\^s, f\^s)$ for $s \in \S$ such that $\X = \prod_{s\in \S} \X\^s$ and $\Y = \msum_{s\in \S}\Y\^s$. Then $P_\l = \msum_{s \in \S} P\^s_\l$
    \end{lemma}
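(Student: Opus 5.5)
We must show that the set of optimal outcomes of the weighted sum problem for the decomposed problem equals the Minkowski sum of the sub-problems' optimal outcome sets. We read $P_\l$ as the set $\Yl=\arg\max\{\fl(y) : y\in\Y\}$ with $\fl(y)=\l y_1+(1-\l)y_2$, and $P\^s_\l$ as $\Yl\^s$ defined in the same way on $\Y\^s$. The same argument then applies verbatim on the decision side, with $\X$ and $\X\^s$ in place of $\Y$ and $\Y\^s$.

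The approach rests on one structural fact: $\fl$ is linear, hence additive over Minkowski sums. Every $y\in\Y$ can be written as $y=\sum_{s\in\S}y\^s$ with $y\^s\in\Y\^s$, and then $\fl(y)=\sum_s \fl(y\^s)$. Since $\Y$ is finite (the feasible set is bounded and integral), each maximum is attained, and
\[
\max_{y\in\Y}\fl(y)=\sum_{s\in\S}\max_{y\^s\in\Y\^s}\fl(y\^s).
\]
This holds because the maximisation over the product $\prod_s\Y\^s$ of a separable sum splits into independent maximisations.

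For the inclusion $\msum_s\Yl\^s\subseteq\Yl$, take $y=\sum_s y\^s$ with each $y\^s\in\Yl\^s$. Then $\fl(y)=\sum_s\max\fl|_{\Y\^s}$, which by the displayed identity equals $\max\fl|_\Y$, so $y\in\Yl$.

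For the reverse inclusion, take $y\in\Yl$ and choose any decomposition $y=\sum_s y\^s$ with $y\^s\in\Y\^s$. We have $\sum_s\fl(y\^s)=\sum_s\max\fl|_{\Y\^s}$, while each term satisfies $\fl(y\^s)\le\max\fl|_{\Y\^s}$. A sum of nonpositive differences that equals zero forces every difference to vanish, so every $y\^s\in\Yl\^s$. Hence $y\in\msum_s\Yl\^s$.

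The main obstacle is interpretive rather than technical. We must fix what $P_\l$ denotes (optimal value, optimal solution set, or optimal outcome set) and check that the decomposition of $y$ need not be unique. The reverse inclusion above works for an arbitrary decomposition, so non-uniqueness causes no difficulty. For the statement about optimal values, the displayed identity is itself the claim.
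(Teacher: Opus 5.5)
Your proof is correct: linearity gives $\max_{\Y}\fl=\sum_{s}\max_{\Y^{(s)}}\fl$, and your two inclusions are the standard argument for this fact (the reverse one via nonpositive slacks summing to zero, which works for any decomposition of $y$). The paper gives no proof of its own and simply imports the lemma from \citet{paper1}, so there is no other route to compare; the only nit is your closing aside, since on the decision side the optimal solution set of the weighted sum problem is the Cartesian product of the sub-problems' optimal solution sets rather than a Minkowski sum, so the argument carries over with that substitution rather than verbatim.
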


  In particular one can decompose the solution process of $y^{ul}$ and $y^{lr}$ by solving the corresponding lex-max solutions $y^{s,ul}$ and $y^{s,lr}$ for each sub-problem $s \in \S$ and setting 

  \begin{align}
    y^{ul} = \sum_{s\in \S} y^{s,ul},\quad \text{ and } \quad y^{lr} = \sum_{s\in \S} y^{s,lr} \label{eq:decomposed_ul_lr}
  \end{align}

  In fact, for any $\lambda \in(0,1)$ a solution $x^\lambda$ can be derived by finding $y^{s,\lambda} \in P\^s_\lambda$ for each $s \in \S$ and thereafter setting

  \begin{align}
    x\^\lambda &= (x\^1,\ldots, x\^S) = (x\^1_1,x\^1_2, \ldots, x\^1_{n\^1},\ldots, x\^S,\ldots x\^S_{n\^S}) \label{eq:decomposed_x_lambda}
  \\
    y\^{\lambda} &= \sum_{s\in \S} y\^{s,\lambda} \label{eq:decomposed_y_lambda}
  \end{align}
  To ease the notation of the paper, we will suppress writing $x$ and instead we will say that an objective vector $y$ is optimal to some problem, if there exists an optimal feasible solution $x$ to the problem for which $Cx = y$.
  
  Since general IP's are known to be NP-hard problems, it is advantageous to decompose them into a set of smaller sub-problems. 
  It is clear that the computational complexity of solving a set of $|\S|$ smaller sub-problems — where each sub-problem $s \in \S$ has $n\^s$ variables and $m\^s$ constraints — is easier than solving a single IP with $n = \sum_{s\in \S}n\^s$ variables and $m = \sum_{s\in \S}m\^s$ constraints.

  Hence, the idea of the decomposition algorithm is to run a version of \findRoi which determines solutions $y^\l$ by solving problems $WS(\lambda)\^s$ for each $s\in \S$, instead of solving the larger \eqref{prob:WS} problem.

  Throughout the iterations of the algorithm it might happen that a sub-problem is solved for different values of $\lambda$ which provide the same solution. \autoref{lemma:lambda-lookup} states that if two $\lambda$-values, $\lambda_1$ and $\lambda_2$, have the same solution, then any $\lambda$-value in the interval between them will have the same solution.

  \begin{lemma}\label{lemma:lambda-lookup}
    Let $x \in \X$ and assume $x \in \aWS{\lambda_1}$ and $x \in \aWS{\lambda_2}$ for $\lambda_1,\lambda_2 \in (0,1)$ with $\lambda_1 < \lambda_2$. Then $x \in \aWS{\lambda}$ for any $\lambda \in [\lambda_1, \lambda_2]$.
    \label{lemma:lambda_lookup}
  \end{lemma}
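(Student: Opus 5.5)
The plan is to write the intermediate weight as a convex combination of the endpoints and use that the weighted-sum objective is affine in $\lambda$. For $\lambda \in [\lambda_1,\lambda_2]$ there is a unique $\mu \in [0,1]$ with $\lambda = \mu\lambda_1 + (1-\mu)\lambda_2$; explicitly $\mu = (\lambda_2-\lambda)/(\lambda_2-\lambda_1)$. For any fixed $x' \in \X$ write
\begin{equation}
  g(\lambda, x') \coloneqq \lambda (C_1x') + (1-\lambda)(C_2x') = C_2x' + \lambda\,(C_1x' - C_2x').
\end{equation}
The map $\lambda \mapsto g(\lambda,x')$ is affine, so
\begin{equation}
  g(\lambda,x') = \mu\, g(\lambda_1,x') + (1-\mu)\, g(\lambda_2,x').
\end{equation}
This identity is the only structural fact needed. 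It also shows that $\lambda \in (0,1)$, so the problem with weight $\lambda$ is a well-defined weighted sum problem.

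Next I use optimality of $x$ at both endpoints. Since $x \in \aWS{\lambda_1}$ and $x \in \aWS{\lambda_2}$, every $x' \in \X$ satisfies $g(\lambda_1,x') \le g(\lambda_1,x)$ and $g(\lambda_2,x') \le g(\lambda_2,x)$. Multiply these inequalities by the nonnegative coefficients $\mu$ and $1-\mu$, add them, and apply the affine identity to both sides. This gives $g(\lambda,x') \le g(\lambda,x)$ for all $x' \in \X$. Because $x \in \X$, it follows that $x \in \aWS{\lambda}$.

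Equivalently, the optimal value function $\lambda \mapsto \max_{x'\in\X} g(\lambda,x')$ is a pointwise maximum of affine functions, hence convex. It lies above the affine function $g(\cdot,x)$ and touches it at $\lambda_1$ and $\lambda_2$, so by convexity it coincides with $g(\cdot,x)$ on the whole segment between them.

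I do not expect a real obstacle. The only points needing care are that $\mu \in [0,1]$, so the inequalities keep their direction when combined, and that the feasible set $\X$ does not depend on $\lambda$, so the comparison ranges over the same $x'$ at every weight. The lemma makes no use of boundedness or integrality beyond the existence of the maximizers, which is already assumed. The same argument applies verbatim to each sub-problem \eqref{prob:SPs}, which is how it will be used for the $\lambda$-lookup.
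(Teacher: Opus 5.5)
Your proof is correct and uses the same idea as the paper's: the weighted-sum objective is affine in $\lambda$. The paper argues by contradiction, noting that the affine difference $h(\lambda)$ between $x$ and a maximizer $\bar x$ at some intermediate weight cannot be nonnegative at $\lambda_1$ and $\lambda_2$ yet negative in between; you give the direct convex-combination version, which handles every competitor $x'$ at once.
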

  \begin{proof}
    \newcommand{\lambdavecc}{(\lambda, 1- \lambda)}
    Assume for contradiction $\exists \bar{\lambda}\in (\lambda_1,\lambda_2)$ where $x \notin \aWS{\bar{\lambda}}$. Let $\bar{x}\in \aWS{\bar{\lambda}}$ and consider the linear function $h(\lambda) = \lambdavecc Cx - \lambdavecc C\bar{x}$.
    Then $h(\lambda_1)\ge 0$, $h(\bar{\lambda}) < 0$ and $h(\lambda_2)\ge 0$, which would contradict $h$ being a linear function since $\lambda_1<\bar{\lambda} <\lambda_2$. 
  \end{proof}

  The algorithm \findRoiDecLook is presented in Algorithm~\ref{ALG:Finding_ROI_lambda}. In this decomposed version of \findRoi, solutions $y\^\l$ are computed as in \eqref{eq:decomposed_y_lambda}.
  The algorithm makes use of \autoref{lemma:lambda_lookup} as follows:
  Any time a sub-problem $\WSs{\lambda}{s}$ is solved, the $\lambda$-value and its corresponding solution is saved in a set $\Lambda\^s$. 
  When a new problem $\WSs{\lambda}{s}$ is to be solved for some $s\in \S$, the algorithm checks in the set $\Lambda\^s$ if there exists two weights $\l_1$ and $\l_2$ both mapping to the same solution $\bar{y}\^s$ such that $\l \in [\l_1, \l_2]$. We refer to this check as \LookupLambda.  The subroutine checks if such a pair of weights is stored in a set $\Lambda\^s$ and returns the corresponding solution if one exists. If \LookupLambda returns a solution $\bar{y}\^s$ then the algorithm makes use of \autoref{lemma:lambda_lookup} to skip calls to the IP-solver and instead loads $\bar{y}\^s$ as the optimal solution to $\WSs{\l}{s}$. We call the process of checking previous calls and loading solutions $\lambda$-lookup.
  If a solution is loaded this way, the algorithm skips solving an NP-hard IP problem, at the cost of the lookup time.

      \begin{algorithm}[h!]
            \KwData{Bi-objective problem $P$ and soft constraint right-hand-side $W$.}
            \KwOut{$y^+$ and $y^-$ satisfying \eqref{def:yplus_yminus}.}
            \BlankLine
            \For{$s\in\S$}{
                \tcc{Compute lex-max solutions for each sub-problem}
                $x^{s,ul}\in\arg\mbox{lex}\max\{(C_2\^sx\^s,C_1\^sx\^s)\mid x\^{s}\in\mathcal{X}\^s\}$\;
                $x^{s,lr}\in\arg\mbox{lex}\max\{(C_1\^sx\^s,C_2\^sx\^s)\mid x\^{s}\in\mathcal{X}\^s\}$\;
                $y^{s,ul}\gets C\^sx\^{s,ul}, y^{s,lr}\gets C\^sx\^{s,lr}$\;%
                $\Lambda\^s \gets \emptyset$
            }
            \BlankLine 
            \tcc{Initialize pointers}
            $y^{ul} \gets \sum_{s \in \S } y^{s,ul}$ and $y^{lr} \gets \sum_{s\in \S} y^{s,lr}$\;
            $y^+ \gets y^{ul}$ and $y^- \gets y^{lr}$\;
            $y^*=\texttt{null}$\;
            \While{$y^* \notin \{y^+,y^-\}$}{
                $\lambda \gets \frac{y^+_2-y^-_2}{y^-_1 - y^+_1 - y^-_2 + y^+_2}$\label{ALG:find_ROI_lambda_repeat}\; 
                \For{$s\in\S$}{
                  \uIf{\LookupLambda returns a solution $\bar{y}\^s$}{
                  ${y\^s}^* \gets \bar{y}\^s$\tcc*{We already know ${\bar{y}}\^s$ is optimal}
                  }
                  \Else{
                    \tcc{Need to compute optimal solution}
                    ${x\^s}^*\in\arg\max\{(\lambda C_1\^s+(1-\lambda)C_2\^s)x\^s\mid x\^{s}\in\mathcal{X}\^s\}$\;
                    ${y\^s}^*\gets C\^s{x\^s}^*$\;%
                    $\Lambda\^s \gets \Lambda\^s \cup \{(\lambda,{y\^s}^*)\}$\tcc*{Update cell information}
                  }
                }
                $y^* \gets \sum_{s\in \S}{y\^s}^*$ \;
                \uIf{$y_2^* \ge W$}{
                  $y^+ \gets y^*$
                }
                \Else{
                  $y^- \gets y^*$
                }
            }
            \Return{$y^+, y^-$}
            \caption{ \findRoiDecLook}\label{ALG:Finding_ROI_lambda}
    \end{algorithm}

    In \autoref{prop:find-roi-dec-runtime} we show how the running time of \findRoiDecLook depends solely on the size of the sub-problems.
    \begin{prop}\label{prop:find-roi-dec-runtime} Let $T(n,m,M)$ be the time complexity of a \bsip problem with $n$ variables, $m$ constraints and \(M\) being the largest entry of $A,b$ and $c$. If all cells of the weight set decomposition $\wsd(|\Yse|)$ are the same size, then the worst-case running time of \findRoiDecLook is $(3+\log(|\Yse|))\cdot \sum_{s\in \S}T(n\^s,m\^s,M\^s)$.
    \end{prop}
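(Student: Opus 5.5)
The plan is to mirror the proof of \autoref{prop:find-roi-runtime} and account separately for each IP call made by \findRoiDecLook. The count has three parts.

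\emph{Initialisation.} The algorithm computes two lexicographic maxima per sub-problem, $x^{s,ul}$ and $x^{s,lr}$. By \eqref{eq:decomposed_ul_lr}, their images sum to $y^{ul}$ and $y^{lr}$. This costs at most $2\sum_{s\in\S}T(n\^s,m\^s,M\^s)$, where I treat a lexicographic maximum as one IP call, exactly as was done in \autoref{prop:find-roi-runtime}.

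\emph{Main loop.} By the decomposition lemma, $P_\l=\msum_{s\in\S}P\^s_\l$. So in each iteration the point $y^*=\sum_s {y\^s}^*$ is an optimal image of \eqref{prob:WS} for the current $\lambda$. This holds whether each ${y\^s}^*$ was computed directly or loaded through \LookupLambda, since \autoref{lemma:lambda_lookup} guarantees that a loaded $\bar y\^s$ is optimal for $\WSs{\l}{s}$. Two consequences follow:
\begin{itemize}
\item The sequence of weights $\lambda$ and pointer updates $y^+,y^-$ can be taken to coincide with that of \findRoi run on the undecomposed problem. Ties can be handled via \autoref{remark:onlyExtremeLambda}.
\item Under the equal-cell-size assumption, the argument of \autoref{prop:find-roi-runtime} applies verbatim: at most $\log_2(|\Yse|)$ halving iterations, plus one final certifying iteration.
\end{itemize}

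\emph{Cost per iteration.} Each iteration solves, for each $s$, at most one sub-problem $\WSs{\l}{s}$ of size $(n\^s,m\^s,M\^s)$. A lookup only removes calls, and I treat its bookkeeping cost as dominated by an IP call. An iteration therefore costs at most $\sum_s T(n\^s,m\^s,M\^s)$. Summing the three parts gives
\[(2+\log_2(|\Yse|)+1)\sum_{s\in\S}T(n\^s,m\^s,M\^s),\]
which is the claimed bound. The largest entry $M\^s$ of a block is at most $M$, so the sub-problem sizes are legitimate arguments of $T$.

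\emph{Main obstacle.} The delicate step is the claim that the decomposed algorithm follows the same trajectory as \findRoi. Sub-problem optima may be non-unique, and a summed optimum could be a non-extreme supported point, which would break the cell-halving argument. To handle this, I would use \autoref{remark:YlandYls}: at any non-critical $\lambda$, every $\Y\^s_\l$ is a singleton, so the summed point is the unique extreme supported point of that cell. I would then invoke \autoref{remark:onlyExtremeLambda} to perturb away from the critical weights, which by the proof of \autoref{prop:boundOnYse} are exactly $\bigcup_s\C(\Yse\^s)$. This makes $y^*$ well defined and identical to the point \findRoi would produce.
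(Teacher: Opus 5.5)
Your proposal is correct and follows the same route as the paper. The paper's proof is the one-line reduction ``follows from \autoref{prop:find-roi-runtime} and \eqref{eq:decomposed_x_lambda}'': the iteration count is inherited from the undecomposed binary search, and each IP call is replaced by one weighted-sum call per block. Your version additionally spells out what the paper leaves implicit, namely that the decomposed, lookup-assisted iterates coincide with those of the undecomposed algorithm, with ties at critical weights handled via \autoref{remark:YlandYls} and \autoref{remark:onlyExtremeLambda}.
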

    \begin{proof} Follows from \autoref{prop:find-roi-runtime} and \eqref{eq:decomposed_x_lambda}.
    \end{proof}

    In \autoref{sec:comp-study} we present an empirical study evaluating \findRoiDecLook. There we study the effect of solving weighted sum problems in the decomposed way, as well as the effect of the $\lambda$-lookup idea.

  \subsubsection{Approximation quality for the original single objective problem}\label{sec:bound-measures}

  If the decision maker is only interested in an approximation of an optimal solution to the original problem, then $y^+$ and $y^-$ would constitute such approximations for the optimal solution $y^*$.
  The point $y^+$ would be suboptimal but feasible, while
  $y^-$ would be better than the optimal solution $y^*$ but infeasible (superoptimal).

  In general $y^* \in \Rectangle\left[y^{+}, y^{-}\right]$, hence knowing $y^+$ and $y^-$ will give bounds on the error.
  We define the point $y^W$ as the intersection between the lines $y_2 = W$ and the line connecting $y^+$ and $y^-$. Note that in general $y^W$ is not feasible. This point satisfies $y^W_2 = W$ and $y^W = \lambda y^+ +(1-\lambda) y^-$ for %
  $\lambda = (W-y^-_2)/(y^+_2-y^-_2)$
  , see \autoref{fig:error_y_W}.
  We know that $y^*$ lies in the triangle defined by $y^+$ and $y^W$, visualised 
  in \autoref{fig:error_y_W}.
  Using this, we will define the bound on the error ($\verb|error bound|$), as well as the $\verb|actual error|$ (distance to $y^*$).
  \begin{align}
    e_1^{actual} \coloneq |y^*_1 - y^+_1| \le |y^W_1 - y^+_1| \eqcolon e_1^{bound}  &\text{ (The optimality error)} \\
    e_2^{actual}\coloneq |y^*_2 - y^+_2| \le |y^W_2 - y^+_2| \eqcolon e_2^{bound}  &\text{ (The feasibility surplus)} 
  \end{align}
  We normalise the errors by using the lex-max solutions and define normalization weights $e^{max}_1$ and $e^{max}_2$ %
  as $e^{max}_1 := |y^{ul}_1 - y^{lr}_1|$ and $e^{max}_2 := |y^{ul}_2 - y^{lr}_2|$.
  We add a bar over an error to indicate that it is normalised, \ie, $\bar{e}_{p}^{type} \coloneq e_{p}^{type}/e^{max}_p$ for $p\in \{1,2\}$ and $type\in \{actual, bound\}$.

    \begin{figure}[tbp]
        \centering
        \begin{tikzpicture}
            \usetikzlibrary{decorations.pathreplacing}
            \node[draw,circle,minimum size=0.25cm,inner sep=0pt, fill=black] (a) at (0,6) {};
            \node[draw,circle,minimum size=0.25cm,inner sep=0pt, fill=black] (b) at (2,5.3) {};
            \node[draw,circle,minimum size=0.25cm,inner sep=0pt, fill=black] (c) at (5,3.5) {};
            \node[draw,circle,minimum size=0.25cm,inner sep=0pt, fill=black] (d) at (6,2) {};
            \node[draw,circle,minimum size=0.25cm,inner sep=0pt] (opt) at (2.8,4.5) {};
            \node[draw,circle,minimum size=0.25cm,inner sep=0pt, fill=black] (yw) at (4.2,4) {};
            \node[draw,circle,minimum size=0.25cm,inner sep=0pt, fill=black] (ybest) at (2.2,4.75) {};
            
            \node[] (optY) at (4,6) {$y^*$};
            \draw[->,shorten >=3pt] (optY) to[out=270, in=0] (opt);
            
            \node[] (ywName) at (5,5) {$y^W$};
            \draw[->,shorten >=3pt] (ywName) to[out=200, in=90] (yw);

            \node[] (ybestName) at (3,6.5) {$y^{best}$};
            \draw[->,shorten >=3pt] (ybestName) to[out=270, in=45] (ybest);
            
            \coordinate (corner) at (2,3.5);
            \draw[dashed] (b)--(c)--(corner)--(b);

            \draw[dotted,thick] (-0.5,4)--(6.5,4);
            \node[above] at (-0.5,4) {$W$};
            \node[above] at (a) {$y^{ul}$};
            \node[above] at (b) {$y^{+}$};
            \node[right] at (c) {$y^{-}$};
            \node[right] at (d) {$y^{lr}$};

            \draw[decorate, decoration={brace}] (1.8,4.01) -- (1.8,5.3) node[midway, left] {$e^{bound}_2$};
            \draw[decorate, decoration={brace}] (4.2,3.3)--(2,3.3) node[midway, below] {$e^{bound}_1$};
        \end{tikzpicture}
        \caption{Illustration of the bound measures. The optimality error $e^{bound}_2$ measures how far $y^+$ is from an optimal solution value. The feasibility surplus measures the slack in the soft constraint at the solution corresponding to $y^+$. The point $y^*$ illustrates the optimal solution, while $y^{best}$ shows the best solution found (not necessarily optimal).}
        \label{fig:error_y_W}
    \end{figure}

  A decision maker might consider the error bound sufficiently small, and choose a solution generated in the process of finding the defining points $y^+$ and $y^-$. Alternatively, if the error gap is too large, or if the decision-maker is interested in generating all points in the \roi, then more computation is needed. In the following subsection, we present an algorithm which finds all points of the \roi.

\subsection{Finding solutions in the region of interest}\label{sec:theory-solve-roi}

  In this subsection, we present an algorithm for finding all \nd-points in the ROI $R$, denoted $\Yr = \Yn \cap R$.
  The idea of the algorithm \solveRoi presented in \autoref{ALG:Solving_ROI} is to solve a sequence of augmented $\varepsilon$ constraint problems of the form
      \begin{equation}\tag{$P\^{s,d,\epsilon}$}
        \max_{x\in \X}\left\{ C\^s_{\bar{d}}x + \frac{1}{\alpha} C\^s_{d}x \mid A\^sx\le b\^s, C\^s_dx\^s \ge \epsilon \right\},
    \label{eq:epsilonLex}
  \end{equation}
  thereby iteratively finding \nd solutions of \SP{}s, until all \SP{}s are solved or a termination criteria is reached.

  Each time a new \SP solution is found the representation $\hat{\mathcal{Y}}_R$ of $\mathcal{Y}_R$ is updated. If all sub-problem \nd solutions are known then $\hat{\mathcal{Y}}_R=\mathcal{Y}_R$. If however, a stopping condition is reached before the \nd solutions of \SP{}s are found, then the quality of the representation $\hat{\mathcal{Y}}_R$ depends on the sequence in which \SP solutions are found.
  \eeg if only the \nd points around the lex-max \SP solutions are known, then the representation $\hat{\mathcal{Y}}_R$ likewise can be expected to consists of points around the lex-max solutions only.
  Before running the \solveRoi algorithm, we assume that the ROI-defining points $y^+$ and $y^-$ are known. Furthermore, we assume to know the \SP points generating these, \ie points $y^{+,s}$ and $y^{-,s}$ for each $s \in \S$ such that $y^+ = \sum_{s \in \S} y^{+,s}$ and $y^- = \sum_{s \in \S} y^{-,s}$.
  The \solveRoi algorithm solves a sequence of sub-problems starting from the \emph{centre} of \SP{}s and moving \emph{outwards} in a bi-directional way, with the goal that the representation $\hYr$ \emph{converges} faster towards $\Yr$ when comparing against a uni-directional sequence. 
  The heuristic idea is that if the known \SP solutions are concentrated around $y^{+,s}$ and $y^{-,s}$ for each $s \in \S$, then we can expect the representation $\hat{\mathcal{Y}}_R$ to be concentrated around $y^+$ and $y^-$ as well, 
  resulting in a better representation of $\Yr$.
  \autoref{fig:AltEps} shows a visualization of two different sequences for finding \ND solutions to a bi-objective problem: a (classical) uni-directional approach and the proposed bi-directional approach.

  New sub-problem solutions are found by solving the augmented $\varepsilon$-constraint lex-max problems defined in \eqref{eq:epsilonLex}, which finds the \emph{next} \nd-point in a given direction~$d$ such that the value of the $d$th objective is greater than $\varepsilon$, where $\varepsilon$ is initially defined by $y\^{s,+}$ and $y\^{s,-}$ for each $s\in \S$.
  This resembles the ideas presented by \citet{chalmet1986algorithm} and further developed by \citet{boland2015}.

  In the following computational study, we will set a maximum number of IP-calls as a stopping criterion. Other possible stopping criteria include a maximum time limit or a minimum error bound, based on the error measures presented in the previous section.

  \begin{figure}
      \begin{center}
\begin{subfigure}[b]{0.45\textwidth}
          \includegraphics[width=1\textwidth]{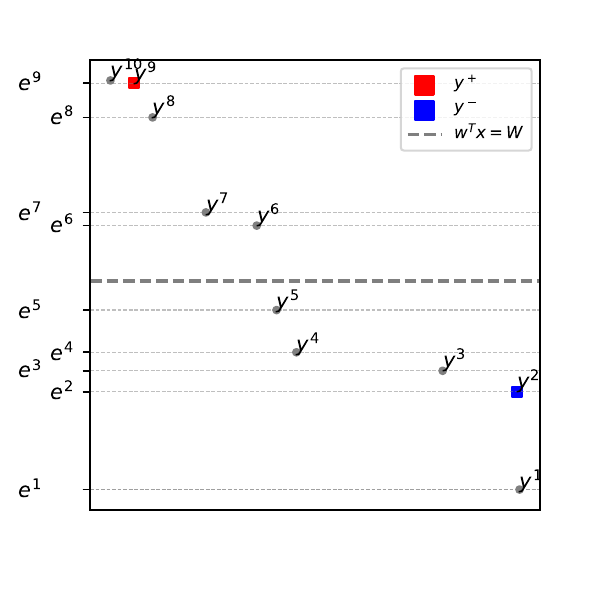}
          \caption{$\varepsilon$-constraint method sequence. $y^i$ is the $i$th point found in the sequence.}
\end{subfigure}
\begin{subfigure}[b]{0.45\textwidth}
          \includegraphics[width=1\textwidth]{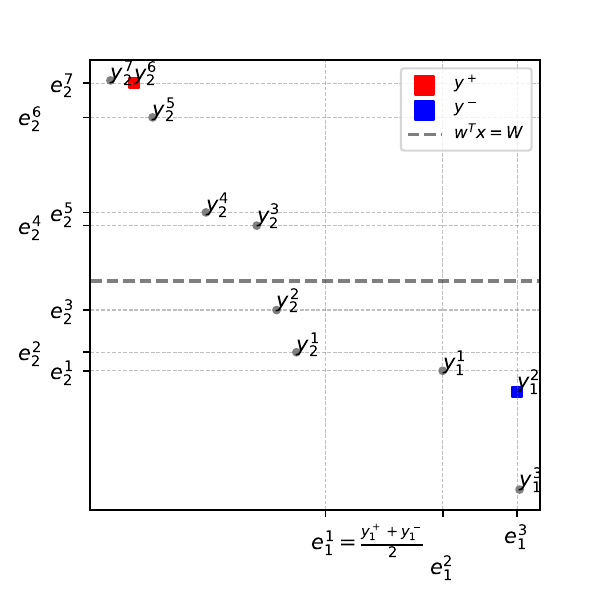}
            \caption{bi-directional $\varepsilon$-constraint method sequence. $y^i_d$ is the $i$th point found in direction $d \in \{1,2\}$.}
\end{subfigure}
    \caption{A visualization comparing the sequence obtained by the uni-directional $\varepsilon$-constraint method and the bi-directional $\varepsilon$-constraint method, which generates points sequentially, starting from the centre of the rectangle $\Rectangle[y^+, y^-]$ and moves outwards.}\label{fig:AltEps}
        \end{center}
  \end{figure}

  \begin{algorithm}[h!]
            \KwData{Bi-objective problem $P$,
            \ROI defining points $y^+,y^-$ along with $y^{+,s}$ and $y^{-,s}$ for each $s \in \S$ such that $y^+ = \sum_{s \in \S} y^{+,s}$ and $y^- = \sum_{s \in \S} y^{-,s}$. $\hat{\Yn}\^s \supseteq \{y\^{s,+}, y\^{s,-}\}$
            }
            \KwOut{A representation $\hYr$ of $\Yr$ if an early stopping condition reached, otherwise $\hYr=\Yr$.}
            $unsolved \gets \S \times \{1,2\}$ \tcc{sub-problem and direction pairs}
            \While{$unsolved \neq \emptyset$}{
              choose $(s,d) \in unsolved$ \tcc{choose unsolved sub-problem and direction, using some selection strategy}
              let $\bar{d} \in \{1,2\}\setminus \{d\}$\label{alg-line:node-selection}\;
              \If{this is the first time sub-problem $s$ is chosen.
              }{
                $\epsilon \gets \frac{1}{2}\left(y^+_{\bar{d}} + y^-_{\bar{d}}\right)$ \tcc{start from sub-problem centre}
              }
              \ElseIf{$y\^{s,d}_{\bar{d}}$ is defined}{
                $\epsilon \gets y\^{s,d}_{\bar{d}} + 1$
                \tcc{define $\epsilon$ from previous solution in direction $d$}
              }
              \Else{
                $\epsilon \gets y\^{s,\bar{d}}_{d} + 1$ \tcc{if first time searching direction $d$ for sub-problem $s$}
              }
              choose $\alpha>0$ sufficiently large\;
                \If{\eqref{eq:epsilonLex} is feasible\label{ALG:Solving_ROI-problem}}
              {
                $x\^{s,d}\gets $ an optimal solution to \eqref{eq:epsilonLex}\;
                $y\^{s,d} \gets C\^sx\^{s,d}$\;
              }
              \Else{
              $unsolved \gets unsolved \setminus \{(s,d)\}$
              }
              $\hYr \gets \nondom{\msum_{s\in\S}\hat{\Yn}\^s}$ \tcc{update representation}
            \If{early stopping condition is reached}{
              \Return{$\hYr$}
            }
            }
            \Return{$\Yr$}
            \caption{ \solveRoi }\label{ALG:Solving_ROI}
  \end{algorithm}

\section{Computational study}\label{sec:comp-study}
\newcommand{\nr}[1][x]{{\color{black}#1}\@\xspace} %
\newcommand{\nrp}[1][x]{{\color{black}#1\%}\@\xspace} %

  In this computational study, we wish to investigate the effectiveness of the above proposed algorithms. We want to answer the following empirical research questions:
  \begin{enumerate}
    \item Finding the \ROI.
      \begin{enumerate}
        \item[1.] How do the proposed methods for finding the region of interest perform with respect to time and IP calls compared to each other and to the \phaseone?
        \item[2.] How well does the provided solutions approximate the optimal solution? %
    \end{enumerate}
  \item Solving the \ROI.
    \begin{enumerate}
      \item[1.] Does a bi-directional method outperform an uni-directional method when solving the \roi-problem?%
      \item[2.] What is the effect of different sub-problem selection methods on the performance of \solveRoi?%
    \end{enumerate}
  \end{enumerate}
  To do this, we create a test-bed of \bsip instances based on sub-problem instances from the literature.
  The instances and details of the implementation are discussed in \autoref{sec:test-instances}. In \autoref{sec:comp-find}, we look at Research Questions~1.1 and 1.2. %
  Then, in \autoref{sec:comp-solve}, we investigate the performance of \solveRoi  by answering Research Questions~2.1 and 2.2. %

\subsection{Test instances and implementation}\label{sec:test-instances}

  To test the algorithms, we create a set of \nr[384] block-structured IP instances available at \citet{BSIPInstances}. 
  The test instances are created by combining blocks of smaller IP problems — specifically multi-dimensional knapsack problems (MKP) and assignment problems (AP). The MKP instances are taken from \citet{drake2015benchmark} and the AP instances are taken from \citet{Forget20}. The set of these sub-problems form a test-bed, where each instance has been classified into type (MKP/AP) and difficulty level (1--easy and 2--hard, defined by the solution time). 

  Given a set of sub-problems $P\^s = (A\^s, b\^s, c\^s)$ for $s \in \S$, one can define a \bsip $(A,b,c)$ by stacking the corresponding vectors/matrices of the sub-problems. Specifically, $A$ is defined by stacking all matrices $A\^s$ diagonally, while $b$ and $c$ are defined by stacking the corresponding vectors vertically and horizontally, respectively. See \autoref{table:visual-instances} for a visualization of how problems are combined.

  We define the coefficients of the soft constraints following an approach similar to that of \citet{schulze2017soft}. 
  Given some objective space coefficient vector $c \in \mathbb{N}^n$ we will define a constraint coefficient vector $w \in \mathbb{N}^n$ in the following two ways. In the random approach, we generate the coefficients as $w_i \sim \unif(1, 1/n\sum_{i}^nc_i)$. In the deterministic approach, we define $w$ as the vector consisting of the inversely ordered entries of $c$. 
  
  Lastly, the right-hand-side value of the soft constraint $wx \ge W$ is calculated as $W := \frac{y^{ul}_2 - y^{lr}_2}{2}$. This is the ``midpoint'' between the two lex-max solutions $y^{lr}, y^{ul}$ of the multi-objective problem $\max\{(cx, wx) | Ax\le b\}$. This requires solving the lex-max solutions for each problem, but is necessary to ensure that the region of interest is well-defined.

  To get a diverse test-bed, we run the instance generation process with eight different seeds for the pseudo random number generator. An overview of all instance configurations is given in Table~\ref{tab:test-instances}. The test-bed of all instances are available at \citet{BSIPInstances}.

  \begin{table}[t]
          
  \centering
  \caption{Overview of instance configurations.}
  \label{tab:test-instances}
  \begin{tabularx}{\linewidth}{lX}
  \toprule
  Configuration of instances& Values / Description \\
  \midrule
  Sub-problem types &  Assigment problems (AP) and Multi-dimensional Knapsack problems (MKP) \\

  Region of interest sizes & $\gamma \in \{0,\; 0.1,\; 0.2\}$ \\

  Seed(s) & $\{1,2,3,4,5,6,7,8\}$\\[2pt]

  \# sub-problems & $\{2,\; 4,\; 6,\; 8\}$ \\

  Difficulty level%
  & $\{1,\; 2\}$ \\

  $w$ coefficients & \{Random, Deterministic inverse\} \\
  \bottomrule
  \end{tabularx}
  \end{table}

  All algorithms were implemented in Python 3.13. 
  The IP problems were solved using CPLEX 22.1.0. CPLEX was run in a single thread, otherwise all default settings were used. %
  For an efficient calculation of the Minkowski sums of non-dominated points, the C-implementation from \citet{klamroth2024} was used.
  Experiments were run on a macOS machine with an M2-processor and 16 GB ram.
  All implementations can be found on the GitHub repository \citet{bsip-Lyngesen26}. %

\subsection{Performance of the find-ROI algorithm}\label{sec:comp-find}

    \begin{figure}
      \centering
          \includegraphics[width=\linewidth]{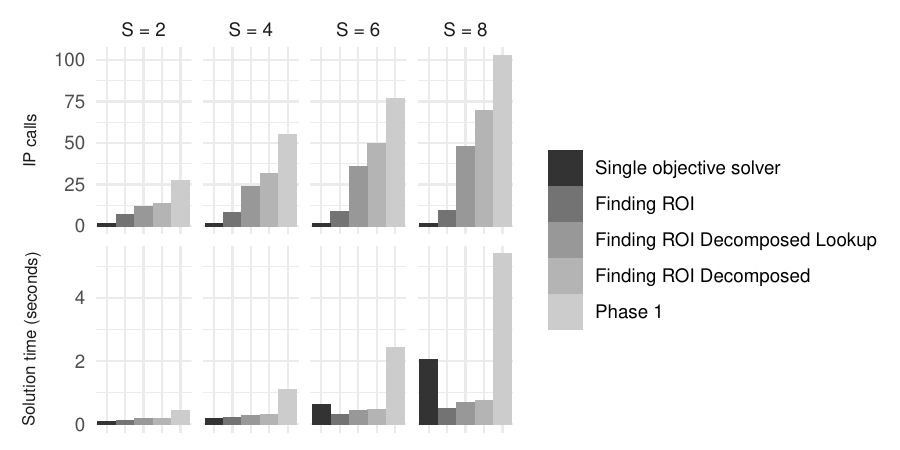}
  \caption{Average IP-calls (first row) and running times (second row) over all \bsip instances, with columns for different number of sub-problems (S).}\label{fig:empFind}
    \end{figure}
    
    In this subsection, we answer the research questions 1.1 and 1.2 related to the task of finding the \roi.
    We ran the algorithms \phaseone, \findRoi and \findRoiDecLook for the set of test instances described in \autoref{sec:test-instances}. 

\subsubsection{Comparing IP calls and running times}
    
    To evaluate the effect of the $\lambda-$lookup of \findRoiDecLook, we also report results for a version of the decomposition \findRoiDec, which does not skip solved intervals of sub-problems. In \autoref{fig:empFind} we show a comparison of the average number of IP calls (first row) along with the average running time (second row) for each test over all instances. Average IP calls and solution time are reported in \autoref{table:emp1-approximation}.

    When comparing the three proposed methods (\autoref{fig:empFind}), it is seen that the binary search without decomposition \findRoi solves fewer IP problems as the ones with decomposition but that each of these sub-problems is computationally more difficult. Looking at the average running time performance (\autoref{table:emp1-approximation}), we observe that \findRoi is the fastest of the tested algorithms, especially for instances with many sub-problems as can also be seen in \autoref{fig:empFind}. Hence, for the tested instances decomposing the calculation of weighted sum solutions did not prove to be faster. This could be a result of an overhead associated with making each IP call: making $S$ small IP-calls seems to be slower than a single large IP-call.
    Comparing the decomposition algorithms \findRoiDec and \findRoiDecLook, we see that there is an efficiency gain when using the $\lambda$-lookup method, where an average of \nrp[24.1] sub-problem IP-calls are skipped when using the $\lambda$-lookup. Additionally, the proportion of IP-calls skipped seems to increase in the number of sub-problems, being  \nrp[12.94], \nrp[23.98], \nrp[28.38] and \nrp[31.17] for $2,4,6$ and $8$ sub-problems, respectively (see \autoref{fig:empFind}).

  \begin{figure}
      \centering        
          \includegraphics[width=1\textwidth]{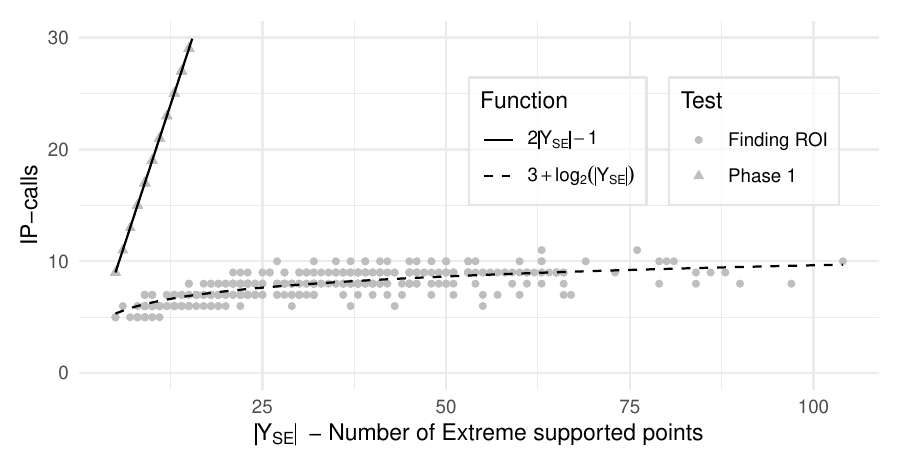}
        \caption{Number of IP calls as a function of $|\Yse|$.}
   \label{fig:emp_IP_calls_Yse}
    \end{figure}
    We notice that the proposed algorithms for finding the \roi defining points $y^+$ and $y^-$ are faster and require solving significantly fewer IP problems compared to running the \phaseone.
    In \autoref{prop:find-roi-runtime} we showed that \findRoi requires solving at most $3+\log2(|\Yse|)$ IP's under the assumption that the cells of the weight space decomposition are all the same size. In \autoref{fig:emp_IP_calls_Yse} we show the number of IP calls as a function of $|\Yse|$ based on the test instances. Here we find that the theoretical worst case (under the strong assumption), serves as a decent approximation of the average number of IP calls.

\subsubsection{Approximation of optimal solutions}

    We find that on average, solving the original single objective \bsip is slower than finding the \roi defining points. For a large number of sub-problems~$S$, the time needed for solving the single objective problem is much higher than for the methods finding the \roi, as can be seen in \autoref{fig:empFind}. This is surprising, as solving the original \bsip does not provide any information of the trade-offs between the objective function and the soft coupling constraint. The \findRoi does not, however, find an optimal solution to the (original) constrained problem. It is therefore of interest to investigate the quality of the best feasible solutions found by \findRoi. 

    We focus on how well the non-dominated solutions found by the algorithms serve as approximations of the single objective optimal solution $y^*$. For each problem the best solution $y\^{best}$ is chosen as either $y^+$ or the best point $y$ found with respect to maximizing $cx$ while satisfying $w\ge W$. In \autoref{table:emp1-approximation} we report the average optimality errors $\bar{e_1}$ as well as the soft constraint surplus $\bar{e_2}$ for each method averaged over all test instances. These error measures are discussed in \autoref{sec:theory-find-roi}.

    We find that the average optimality error $\bar{e}_1^{actual}$ of \nrp[1.54] and the optimality bound $\bar{e}_1^{bound}$ of \nrp[2.29] are relatively small for each method. Likewise, we find that the feasibility error $\bar{e}_2^{actual}$ of \nrp[4.83] and the feasibility error bound $\bar{e}_2^{bound}$ of \nrp[5.97] are small.

    If the computation of weighted sum problems is decomposed into solving sub-problems, a set of sub-problem solutions is known. It then can happen that such sub-problem solutions combine into a global solution which is closer to $y^*$. Because of this, knowing more sub-problem solutions results in lower optimality errors. This can be seen in \autoref{table:emp1-approximation} where the decomposed binary search of \findRoiDecLook results in an optimality gap of around \nrp[0.51], while knowing all extreme supported points of sub-problems results in the best optimality error of \nrp[0.74]. 
    Additionally, we see that in \nrp[32.03] of the solved instances the optimal solution was found when solving in the decomposed way, while this was true for only \nrp[15.89] instances when the problem was not decomposed.
    
Concluding, we can say that the methods \findRoi and \findRoiDecLook both can serve as meaningful heuristics for finding approximate solutions to \bsip's. It is interesting to note here that \findRoi would return the feasible solution $y^+$ which corresponds to the solution obtained from a Lagrangian relaxation of the problem. Additionally, \findRoi would return a super-optimal solution $y^-$, which together with $y^+$ would provide a provable optimality gap along with trade-off information. The other proposed method \findRoiDecLook would, at a slightly higher computational cost, provide the same information, and additionally by combining the solutions of sub-problem a potentially better solution $y\^{best}$ is obtained.
    \begin{table}
      \footnotesize
    \caption{
    The table presents error measures for each method averaged over all test instances.
    Numbers are written in \% except the columns with average solution time in seconds (Time) and the average number of IP calls (IP). 
    The column `$y^*$ found` shows the proportion of instances where the optimal solution to \bsip was found. 
    The column opt\_gap shows the optimality gap $\frac{y^w_1-y^{best}_1}{y^w_1}$ (in the traditional sense), where $y^w_1$ is an upper bound and $y_1^{best}$ is a lower bound for $y^*_1$. }\label{table:emp1-approximation}
      \centering
      \begin{tabularx}{\linewidth}{p{1.5cm}rrrrrrrr}\toprule
      Method & $\bar{e}_1^{actual}$ & $\bar{e}_1^{bound}$ & $\bar{e}_2^{actual}$ & $\bar{e}_2^{bound}$& opt\_gap &$y^*$ found & IP & Time\\
      \midrule

Single objective solver & NA & NA & NA & NA & NA & NA & 1.00 & 0.73\\
\midrule
\findRoi & 1.54 & 2.29 & 4.83 & 5.97 & 0.74 & 15.89 & 7.78 & 0.28\\
\midrule
\findRoiDecLook & 1.54 & 2.29 & 4.83 & 5.97 & 0.51 & 32.03 & 29.49 & 0.38\\
\midrule
\findRoiDec & 1.54 & 2.29 & 4.83 & 5.97 & 0.51 & 32.03 & 40.67 & 0.41\\
\midrule
\phaseone & 1.54 & 2.29 & 4.83 & 5.97 & 0.74 & 15.89 & 65.19 & 2.32\\
      \bottomrule
      \end{tabularx}
    \end{table}

\subsection{Performance of the solve-ROI algorithm}\label{sec:comp-solve}

  Here, we investigate the proposed algorithm for finding all points in the region of interest answering research questions 2.1 and 2.2. Before computing all points in the \roi, \findRoiDecLook needs to find the defining points $y^+$ and $y^-$ for the region of interest. This means that before running \solveRoi a subset of $\Yr$ is already known. In the computational experiments, three different sizes $(\gamma)$ of \roi are tested as shown in $\autoref{tab:test-instances}$. To reduce the time needed to run the experiments only half of the seeds were solved, and instances with $8$ sub-problems were excluded, resulting in a total of \nr[432] instances.

  \subsubsection{Bi-directional vs uni-directional search}\label{sec:node-selection}

    \begin{figure}
      \centering        
      \begin{subfigure}[t]{0.47\linewidth}
          \includegraphics[width=0.95\textwidth]{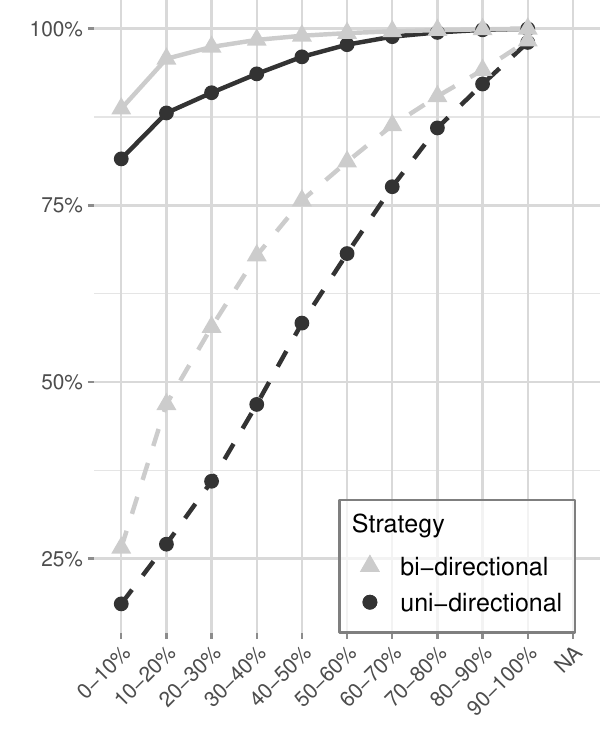}
        \caption{Comparing bi-directional and uni-directional methods. Node-selection strategy fixed to "alternating".}\label{fig:emp-res-bi-dir}
    \end{subfigure}
    \begin{subfigure}[t]{0.47\linewidth}
        \includegraphics[width=0.95\textwidth]{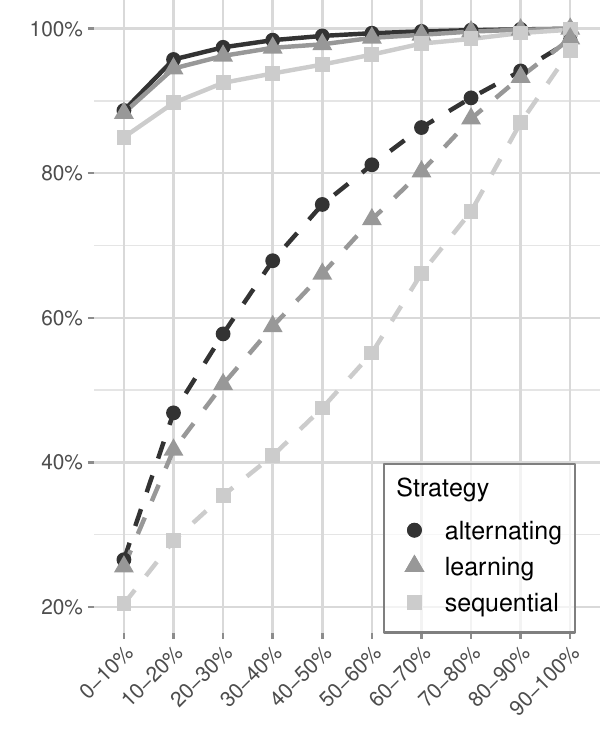}
        \caption{Comparing sub-problem selection strategies. All for the bi-directional strategy.}\label{fig:emp-res-sub-problem-choice}
    \end{subfigure}
      \caption{
      The $x$-axis shows the total number of sub-problem points found, as a proportion of the total number of sub-problem points, binned into intervals. The $y$-axis shows the relative Hypervolume (solid) and the proportion of ROI points found (dashed).%
      }\label{fig:emp2}
    \end{figure}

    We are interested in evaluating the anytime performance of the two configurations of the algorithm: The `uni-directional` method solves sub-problems using the $\epsilon$-constraint method starting from the lower-right part of the Pareto Front, ending with the upper-left point. The `bi-directional` approach described in Algorithm~\ref{ALG:Solving_ROI}, on the other hand, starts from the `centre` of the sub-problem and moves outwards. %
    To evaluate the effect of using the proposed \emph{bi-directional} search for \solveRoi we fix the sub-problem selection strategy to `alternating` in \autoref{fig:emp-res-sub-problem-choice}. 
    
    Both methods return $\Yr$ if no early stopping condition is reached. 
    To compare the convergence of the algorithms towards the set $\Yr$ we will consider an early stopping condition by limiting the number of iterations. In each iteration a sub-problem is solved and, therefore, a new sub-problem solution is generated — unless the sub-problem is infeasible, or the optimal solutions was found when running the \findRoiDecLook. We will consider the \emph{proportion of sub-problem points found} as a normalised measure of the progress. The proportion starts at a level dependent on the number of solutions found by the \findRoiDecLook and the main loop of \solveRoi terminates only when the proportion is $100\%$. 
    We will consider two measures for evaluating the quality of the returned set $\hYr$ as an approximation of $\Yr$. The \emph{Hypervolume} of a set $\Y \subset\R^2$, denoted $\HVoperator(\Y)$ measures the total area of some restricted box, which is dominated by the set $\Y$. 
    The Hypervolume of a set is a widely used measure for the representation quality of \ND sets, and we refer to \citet{guerreiro2021hypervolume} for a discussion of the properties of the measure as well as the computation thereof.
    As a first measure, we will use $\HVRoperator(\hYr)=\frac{\HVoperator(\hat\Yr)}{\HVoperator(\Yr)}$, the relative Hypervolume of $\hYr$ inside the \roi. As a second quality measure, we will count the proportion of interesting solutions found $\PROPoperator(\hYr) = \frac{|\hYr \cap  \Yr|}{|\Yr|}$, as done in \citet{DOMINGUEZRIOS2021210}. In \autoref{fig:emp-res-bi-dir} we compare the error measures $\HVRoperator(\hYr)$ %
    and $\PROPoperator(\hYr)$ for two `bi-directional` as well as a `uni-directional` solution approach. 
    
    The computational results show, that the bi-directional approach outperforms the uni-directional approach, as this consistently have higher $\HVRoperator(\hat\Yr)$ and higher $\PROPoperator(\hYr)$. This confirms the intuition of \solveRoi that, when sub-problem solutions are found starting from the centre followed by an outwards movement, the sub-problem solutions are more likely to combine into solutions in the \roi, as opposed the `uni-directional` approach. 
    
    We conclude that bi-directional search is superior and fix this in the following analysis where we investigate the effects of node selection strategies.

    \subsubsection{Node selection strategies}
    
    We will now shift the focus to testing different strategies for selecting the sub-problem, $s$, and the direction pairs $(s,d)$ (see \autoref{ALG:Solving_ROI}, \autoref{alg-line:node-selection}).
    We consider three selection strategies: alternating, sequential, and a learning-based approach.
    The alternating strategy chooses a new (unsolved) sub-problem in each iteration.
    The sequential strategy solves sub-problems in sequence, starting by finding all solutions to the first sub-problem before proceeding to the second, and so on. Both the alternating and the sequential method look for solutions to sub-problems using the `bi-directional` search.
    The learning strategy is based on the upper-confidence bound approach from reinforcement learning \citep{barto2021reinforcement}, which attempts to balance exploration (trying other sub-problems which have not been picked often) and exploitation (picking sub-problems, which show historical potential of improving Hypervolume). As rewards we use the improvement in relative Hypervolume (inside the region of interest). Using this, we can model the node-selection problem as a restless/non-stationary bandit problem \citep{whittle1988restless}. It is restless because the rewards change over time — one sub-problem might have many solutions that initially improve the relative Hypervolume a lot, but after that maybe no improvements can be gained by finding more solutions to the given sub-problem. 

    An initial small scale study was performed comparing several different learning based node-selection strategies and several configurations of hyperparameters for these. 
    The initial study led to the selection of the upper-confidence-bound approach for the choice for the learning based method.
    In \autoref{fig:emp-res-sub-problem-choice} we compare the results for the node-selection strategies `alternating', `sequential', and `learning'.
     
    We find that the strategy `alternating' performs best, while `learning' is competitive. The strategy defined by `sequential' is on the other hand performing the worst, indicating that it makes sense to alternate between choice of sub-problems. We do not suggest that the `learning'-strategy is the best possible learning-based strategy or that more elaborate search schemes cannot be devised. However, we would like to emphasize that the results suggests that a simple `alternating' strategy is a good heuristic. The mean (median) number of iterations for the algorithms were \nr[204] (\nr[198]). To fully utilize learning-based methods it might be necessary to consider instances requiring many more iterations. Furthermore, we find a clear improvement in convergence when using the `alternating' as opposed to the `sequential' strategy. This is expected, since for a fixed number of known sub-problem points, combining solutions from several evenly sized sets (the alternating case) results into many more solutions, as opposed to the sequential case, where some sets contain almost all of the points, resulting in fewer combinations.

\section{Conclusion}

In this paper, we have proposed a multi-objective perspective on block-structured integer programming problems featuring soft constraints. Specifically, we interpret the soft constraint as a second objective to be optimized, and in doing so, transform the coupled problem into an additively-separable bi-objective optimization problem.
Our multi–objective approach does not make the problem easier as such, since a set of efficient solutions is required instead of a single optimal solution. However, this approach makes the problem decomposable, and provides a decision–maker with trade–off information.
Recognizing that a decision maker is typically only interested in trade-offs close to the soft constraint boundary ($W$), 
we require only trade-off solutions inside a \emph{region of interest}. This simplifies the problem while still providing trade–off solutions, as well as provable bounds on the optimal solution to the single-objective problem.

First, on a theoretical level, we established a new strict upper bound on the number of extreme supported points ($\Yse$) in the multi-objective counterpart problem. By analyzing critical weights, we proved that the total number of global extreme supported points is bounded linearly by the sum of the cardinalities of the sub-problems' extreme supported points. This structural property provides a guarantee against exponentially many extreme supported solutions and thereby bounds the worst-case performance of the dichotomic search procedure.

Next, we developed the \findRoi algorithm for calculating the points ($y^+$ and $y^-$) defining the \emph{region of interest}. Rather than executing an expensive, full \phaseone (dichotomic search), \findRoi performs a binary search in the weight set to dramatically reduce the number of required single-objective integer programming (IP) calls. 
Under strict assumptions, \findRoi requires at most $(3+\log_2(|\Yse|))$ instead of $(2|\Yse|-1)$ IP calls. This was also empirically observed even without the strict assumptions.
Additionally, we introduced the \findRoiDecLook algorithm, which utilizes the decomposable structure, and establishes a $\lambda$-lookup mechanism that uses the optimality of already found solutions for certain $\lambda$-values to reduce the number of IP calls. For large instances the method worked as a strong heuristic providing bounds on the gap to an optimal solution, providing trade-off information in addition to the solution. In several instances, the method found an optimal solution to the single objective problem.

Furthermore, for scenarios where the complete set of \nd points within the ROI is required, we introduced \solveRoi, a bi-directional $\epsilon$-constraint method that searches from the centres of sub-problems outward. Paired with an alternating sub-problem node-selection strategy, this approach yields superior representation convergence compared to simple uni-directional or sequential methods. 
Several promising areas for future research emerge from this work. 
First, the learning-based node-selection strategy, although simple, showed promising anytime performance. Refining the reinforcement learning heuristics could lead to higher efficiency for complex problems requiring large numbers of iterations. 
Second, expanding the multi-objectivization framework to block-structured integer programs with multiple soft coupling constraints represents a natural next step. 
Finally, investigating the effect of the algorithmic set-up on other classes of block-structured problems, such as for instance quadratically coupled problems, is of interest for further research.

\appendix

  \bibliographystyle{elsarticle-harv} 
  \bibliography{literature}

\begin{thebibliography}{42}
\expandafter\ifx\csname natexlab\endcsname\relax\def\natexlab#1{#1}\fi
\providecommand{\url}[1]{\texttt{#1}}
\providecommand{\href}[2]{#2}
\providecommand{\path}[1]{#1}
\providecommand{\DOIprefix}{doi:}
\providecommand{\ArXivprefix}{arXiv:}
\providecommand{\URLprefix}{URL: }
\providecommand{\Pubmedprefix}{pmid:}
\providecommand{\doi}[1]{\href{http://dx.doi.org/#1}{\path{#1}}}
\providecommand{\Pubmed}[1]{\href{pmid:#1}{\path{#1}}}
\providecommand{\bibinfo}[2]{#2}
\ifx\xfnm\relax \def\xfnm[#1]{\unskip,\space#1}\fi
\bibitem[{Aneja and Nair(1979)}]{aneja1979bicriteria}
\bibinfo{author}{Aneja, Y.P.}, \bibinfo{author}{Nair, K.P.},
  \bibinfo{year}{1979}.
\newblock \bibinfo{title}{Bicriteria transportation problem}.
\newblock \bibinfo{journal}{Management Science} \bibinfo{volume}{25},
  \bibinfo{pages}{73--78}.
\bibitem[{Barto(2021)}]{barto2021reinforcement}
\bibinfo{author}{Barto, A.G.}, \bibinfo{year}{2021}.
\newblock \bibinfo{title}{Reinforcement learning: An introduction. by
  richard’s sutton}.
\newblock \bibinfo{journal}{SIAM Rev} \bibinfo{volume}{6},
  \bibinfo{pages}{423}.
\bibitem[{Benders(1962)}]{benders1962partitioning}
\bibinfo{author}{Benders, J.F.}, \bibinfo{year}{1962}.
\newblock \bibinfo{title}{Partitioning procedures for solving mixed-variables
  programming problems}.
\newblock \bibinfo{journal}{Numerische mathematik} \bibinfo{volume}{4},
  \bibinfo{pages}{238–--252}.
\bibitem[{Benson and Sun(2002)}]{benson2002weight}
\bibinfo{author}{Benson, H.P.}, \bibinfo{author}{Sun, E.},
  \bibinfo{year}{2002}.
\newblock \bibinfo{title}{A weight set decomposition algorithm for finding all
  efficient extreme points in the outcome set of a multiple objective linear
  program}.
\newblock \bibinfo{journal}{European Journal of Operational Research}
  \bibinfo{volume}{139}, \bibinfo{pages}{26--41}.
\bibitem[{B{\"o}kler and Mutzel(2015)}]{bokler2015output}
\bibinfo{author}{B{\"o}kler, F.}, \bibinfo{author}{Mutzel, P.},
  \bibinfo{year}{2015}.
\newblock \bibinfo{title}{Output-sensitive algorithms for enumerating the
  extreme nondominated points of multiobjective combinatorial optimization
  problems}, in: \bibinfo{booktitle}{Algorithms-ESA 2015: 23rd Annual European
  Symposium , Patras, Greece, September 14-16, 2015, Proceedings},
  \bibinfo{organization}{Springer}. pp. \bibinfo{pages}{288--299}.
\bibitem[{Boland et~al.(2015)Boland, Charkhgard and Savelsbergh}]{boland2015}
\bibinfo{author}{Boland, N.}, \bibinfo{author}{Charkhgard, H.},
  \bibinfo{author}{Savelsbergh, M.}, \bibinfo{year}{2015}.
\newblock \bibinfo{title}{A criterion space search algorithm for biobjective
  integer programming: The balanced box method}.
\newblock \bibinfo{journal}{INFORMS Journal on Computing} \bibinfo{volume}{27},
  \bibinfo{pages}{735--754}.
\bibitem[{Chalmet et~al.(1986)Chalmet, Lemonidis and
  Elzinga}]{chalmet1986algorithm}
\bibinfo{author}{Chalmet, L.}, \bibinfo{author}{Lemonidis, L.},
  \bibinfo{author}{Elzinga, D.}, \bibinfo{year}{1986}.
\newblock \bibinfo{title}{An algorithm for the bi-criterion integer programming
  problem}.
\newblock \bibinfo{journal}{European Journal of Operational Research}
  \bibinfo{volume}{25}, \bibinfo{pages}{292--300}.
\bibitem[{Chen(2019)}]{chen2019block}
\bibinfo{author}{Chen, L.}, \bibinfo{year}{2019}.
\newblock \bibinfo{title}{On block-structured integer programming and its
  applications}, in: \bibinfo{booktitle}{Nonlinear Combinatorial Optimization}.
  \bibinfo{publisher}{Springer}, pp. \bibinfo{pages}{153--177}.
\bibitem[{Cohon(2013)}]{cohon2013multiobjective}
\bibinfo{author}{Cohon, J.L.}, \bibinfo{year}{2013}.
\newblock \bibinfo{title}{Multiobjective programming and planning}.
\newblock \bibinfo{publisher}{Courier Corporation}.
\bibitem[{Cslovjecsek et~al.(2025)Cslovjecsek, Kouteck{\`y}, Lassota, Pilipczuk
  and Polak}]{cslovjecsek2025}
\bibinfo{author}{Cslovjecsek, J.}, \bibinfo{author}{Kouteck{\`y}, M.},
  \bibinfo{author}{Lassota, A.}, \bibinfo{author}{Pilipczuk, M.},
  \bibinfo{author}{Polak, A.}, \bibinfo{year}{2025}.
\newblock \bibinfo{title}{Parameterized algorithms for block-structured integer
  programs with large entries}.
\newblock \bibinfo{journal}{TheoretiCS} \bibinfo{volume}{4}.
\bibitem[{Dantzig and Wolfe(1960)}]{dantzig1960decomposition}
\bibinfo{author}{Dantzig, G.B.}, \bibinfo{author}{Wolfe, P.},
  \bibinfo{year}{1960}.
\newblock \bibinfo{title}{Decomposition principle for linear programs}.
\newblock \bibinfo{journal}{Operations research} \bibinfo{volume}{8},
  \bibinfo{pages}{101--111}.
\bibitem[{Desrosiers et~al.(2024)Desrosiers, L{\"u}bbecke, Desaulniers and
  Gauthier}]{desrosiers2024branch}
\bibinfo{author}{Desrosiers, J.}, \bibinfo{author}{L{\"u}bbecke, M.},
  \bibinfo{author}{Desaulniers, G.}, \bibinfo{author}{Gauthier, J.B.},
  \bibinfo{year}{2024}.
\newblock \bibinfo{title}{Branch-and-price}.
\newblock \bibinfo{publisher}{Springer}.
\bibitem[{Ángel Domínguez-Ríos et~al.(2021)Ángel Domínguez-Ríos, Chicano
  and Alba}]{DOMINGUEZRIOS2021210}
\bibinfo{author}{Ángel Domínguez-Ríos, M.}, \bibinfo{author}{Chicano, F.},
  \bibinfo{author}{Alba, E.}, \bibinfo{year}{2021}.
\newblock \bibinfo{title}{Effective anytime algorithm for multiobjective
  combinatorial optimization problems}.
\newblock \bibinfo{journal}{Information Sciences} \bibinfo{volume}{565},
  \bibinfo{pages}{210--228}.
\newblock \URLprefix
  \url{https://www.sciencedirect.com/science/article/pii/S0020025521002310},
  \DOIprefix\doi{https://doi.org/10.1016/j.ins.2021.02.074}.
\bibitem[{Drake(2015)}]{drake2015benchmark}
\bibinfo{author}{Drake, J.}, \bibinfo{year}{2015}.
\newblock \bibinfo{title}{Benchmark instances for the multidimensional knapsack
  problem}.
\newblock \bibinfo{journal}{Available from ResearchGate} \bibinfo{volume}{2}.
\newblock \URLprefix
  \url{www.researchgate.net/publication/271198281_Benchmark_instances_for_the_Multidimensional_Knapsack_Problem.},
  \DOIprefix\doi{10.13140/2.1.3578.9122}.
\bibitem[{Ehrgott(2005)}]{ehrgott2005multicriteria}
\bibinfo{author}{Ehrgott, M.}, \bibinfo{year}{2005}.
\newblock \bibinfo{title}{Multicriteria optimization}. volume
  \bibinfo{volume}{491}.
\newblock \bibinfo{publisher}{Springer Science \& Business Media}.
\bibitem[{Eisenbrand et~al.(2018)Eisenbrand, Hunkenschr{\"o}der and
  Klein}]{eisenbrand2018faster}
\bibinfo{author}{Eisenbrand, F.}, \bibinfo{author}{Hunkenschr{\"o}der, C.},
  \bibinfo{author}{Klein, K.M.}, \bibinfo{year}{2018}.
\newblock \bibinfo{title}{Faster algorithms for integer programs with block
  structure}.
\newblock \bibinfo{journal}{arXiv preprint arXiv:1802.06289} .
\bibitem[{Forget et~al.(2020)Forget, Gadegaard, Klamroth, Nielsen and
  Przybylski}]{Forget20}
\bibinfo{author}{Forget, N.}, \bibinfo{author}{Gadegaard, S.},
  \bibinfo{author}{Klamroth, K.}, \bibinfo{author}{Nielsen, L.},
  \bibinfo{author}{Przybylski, A.}, \bibinfo{year}{2020}.
\newblock \bibinfo{title}{Branch-and-bound and objective branching with three
  objectives}.
\newblock \bibinfo{type}{Technical Report}. Optimization Online.
\newblock \URLprefix
  \url{http://www.optimization-online.org/DB_FILE/2020/12/8158.pdf}.
\bibitem[{Gardenghi et~al.(2011)Gardenghi, G{\'{o}}mez, Miguel and
  Wiecek}]{Gardenghi2011}
\bibinfo{author}{Gardenghi, M.}, \bibinfo{author}{G{\'{o}}mez, T.},
  \bibinfo{author}{Miguel, F.}, \bibinfo{author}{Wiecek, M.M.},
  \bibinfo{year}{2011}.
\newblock \bibinfo{title}{Algebra of efficient sets for multiobjective complex
  systems}.
\newblock \bibinfo{journal}{Journal of Optimization Theory and Applications}
  \bibinfo{volume}{149}, \bibinfo{pages}{385--410}.
\newblock \DOIprefix\doi{10.1007/s10957-010-9786-y}.
\bibitem[{Guerreiro et~al.(2021)Guerreiro, Fonseca and
  Paquete}]{guerreiro2021hypervolume}
\bibinfo{author}{Guerreiro, A.P.}, \bibinfo{author}{Fonseca, C.M.},
  \bibinfo{author}{Paquete, L.}, \bibinfo{year}{2021}.
\newblock \bibinfo{title}{The hypervolume indicator: Computational problems and
  algorithms}.
\newblock \bibinfo{journal}{ACM Computing Surveys (CSUR)} \bibinfo{volume}{54},
  \bibinfo{pages}{1--42}.
\bibitem[{Halffmann et~al.(2020)Halffmann, Dietz, Przybylski and
  Ruzika}]{halffmann2020inner}
\bibinfo{author}{Halffmann, P.}, \bibinfo{author}{Dietz, T.},
  \bibinfo{author}{Przybylski, A.}, \bibinfo{author}{Ruzika, S.},
  \bibinfo{year}{2020}.
\newblock \bibinfo{title}{An inner approximation method to compute the weight
  set decomposition of a triobjective mixed-integer problem}.
\newblock \bibinfo{journal}{Journal of Global Optimization}
  \bibinfo{volume}{77}, \bibinfo{pages}{715--742}.
\bibitem[{Helfrich et~al.(2024)Helfrich, Prinz and
  Ruzika}]{helfrich2024weighted}
\bibinfo{author}{Helfrich, S.}, \bibinfo{author}{Prinz, K.},
  \bibinfo{author}{Ruzika, S.}, \bibinfo{year}{2024}.
\newblock \bibinfo{title}{The weighted p-norm weight set decomposition for
  multiobjective discrete optimization problems}.
\newblock \bibinfo{journal}{Journal of Optimization Theory and Applications}
  \bibinfo{volume}{202}, \bibinfo{pages}{1187--1216}.
\bibitem[{Kerb{\'{e}}r{\'{e}}n{\`{e}}s(2022)}]{Kerberenes2022phd}
\bibinfo{author}{Kerb{\'{e}}r{\'{e}}n{\`{e}}s, A.}, \bibinfo{year}{2022}.
\newblock \bibinfo{title}{Multiobjective optimization for complex systems}.
\newblock Ph.D. thesis. Universit{\'e} Paris sciences et lettres.
\newblock \URLprefix \url{https://theses.hal.science/tel-03677499v1}.
\bibitem[{Klamroth et~al.(2013)Klamroth, Köbis, Schöbel and
  Tammer}]{klamroth2013uncertain}
\bibinfo{author}{Klamroth, K.}, \bibinfo{author}{Köbis, E.},
  \bibinfo{author}{Schöbel, A.}, \bibinfo{author}{Tammer, C.},
  \bibinfo{year}{2013}.
\newblock \bibinfo{title}{A unified approach for different concepts of
  robustness and stochastic programming via non-linear scalarizing
  functionals}.
\newblock \bibinfo{journal}{Optimization} \bibinfo{volume}{62},
  \bibinfo{pages}{649--671}.
\newblock \DOIprefix\doi{10.1080/02331934.2013.769104}.
\bibitem[{Klamroth et~al.(2024)Klamroth, Lang and Stiglmayr}]{klamroth2024}
\bibinfo{author}{Klamroth, K.}, \bibinfo{author}{Lang, B.},
  \bibinfo{author}{Stiglmayr, M.}, \bibinfo{year}{2024}.
\newblock \bibinfo{title}{Efficient dominance filtering for unions and
  minkowski sums of non-dominated sets}.
\newblock \bibinfo{journal}{Computers \& Operations Research}
  \bibinfo{volume}{163}, \bibinfo{pages}{106506}.
\newblock \DOIprefix\doi{10.1016/j.cor.2023.106506}.
\bibitem[{Knop et~al.(2020)Knop, Kouteck{\`y} and Mnich}]{knop2020}
\bibinfo{author}{Knop, D.}, \bibinfo{author}{Kouteck{\`y}, M.},
  \bibinfo{author}{Mnich, M.}, \bibinfo{year}{2020}.
\newblock \bibinfo{title}{Combinatorial n-fold integer programming and
  applications}.
\newblock \bibinfo{journal}{Mathematical Programming} \bibinfo{volume}{184},
  \bibinfo{pages}{1--34}.
\bibitem[{Könen and Stiglmayr(2025)}]{koenen2025supported}
\bibinfo{author}{Könen, D.}, \bibinfo{author}{Stiglmayr, M.},
  \bibinfo{year}{2025}.
\newblock \bibinfo{title}{On supportedness in multi-objective integer linear
  programming}.
\newblock \bibinfo{journal}{Journal of Multi-Criteria Decision Analysis}
  \bibinfo{volume}{32}, \bibinfo{pages}{e70024}.
\newblock \DOIprefix\doi{10.1002/mcda.70024}.
\bibitem[{Lyngesen(2026a)}]{bsip-Lyngesen26}
\bibinfo{author}{Lyngesen, M.}, \bibinfo{year}{2026}a.
\newblock \bibinfo{title}{Block-structured integer program {BSIP})}.
\newblock \URLprefix \url{https://github.com/lyngesen/bsip}.
  \bibinfo{note}{{P}ython implementation. accessed 2025-05-02}.
\bibitem[{Lyngesen(2026b)}]{BSIPInstances}
\bibinfo{author}{Lyngesen, M.}, \bibinfo{year}{2026}b.
\newblock \bibinfo{title}{Block-structured integer program instances ({
  MOrepo-Lyngesen26a})}.
\newblock \URLprefix \url{https://github.com/MCDMSociety/MOrepo-Lyngesen26a}.
  \bibinfo{note}{{I}nstance and result files at MOrepo. accessed 2025-05-02}.
\bibitem[{Lyngesen et~al.(2025)Lyngesen, Gadegaard and Nielsen}]{paper1}
\bibinfo{author}{Lyngesen, M.}, \bibinfo{author}{Gadegaard, S.L.},
  \bibinfo{author}{Nielsen, L.R.}, \bibinfo{year}{2025}.
\newblock \bibinfo{title}{Generator sets for the minkowski sum problem}.
\newblock \bibinfo{journal}{European Journal of Operational Research} .
\bibitem[{Martin(2012)}]{martin2012large}
\bibinfo{author}{Martin, R.K.}, \bibinfo{year}{2012}.
\newblock \bibinfo{title}{Large scale linear and integer optimization: a
  unified approach}.
\newblock \bibinfo{publisher}{Springer Science \& Business Media}.
\bibitem[{Przybylski et~al.(2010a)Przybylski, Gandibleux and
  Ehrgott}]{Przybylski2010}
\bibinfo{author}{Przybylski, A.}, \bibinfo{author}{Gandibleux, X.},
  \bibinfo{author}{Ehrgott, M.}, \bibinfo{year}{2010}a.
\newblock \bibinfo{title}{A recursive algorithm for finding all nondominated
  extreme points in the outcome set of a multiobjective integer programme.}
\newblock \bibinfo{journal}{INFORMS Journal on Computing} \bibinfo{volume}{22},
  \bibinfo{pages}{371--386}.
\newblock \URLprefix
  \url{https://research.ebsco.com/linkprocessor/plink?id=3f63fd46-f848-3395-8a2e-0133f5203cd3}.
\bibitem[{Przybylski et~al.(2010b)Przybylski, Gandibleux and
  Ehrgott}]{przybylski2010recursive}
\bibinfo{author}{Przybylski, A.}, \bibinfo{author}{Gandibleux, X.},
  \bibinfo{author}{Ehrgott, M.}, \bibinfo{year}{2010}b.
\newblock \bibinfo{title}{A recursive algorithm for finding all nondominated
  extreme points in the outcome set of a multiobjective integer programme}.
\newblock \bibinfo{journal}{INFORMS Journal on Computing} \bibinfo{volume}{22},
  \bibinfo{pages}{371--386}.
\bibitem[{Rostami et~al.(2017)Rostami, Neri and
  Epitropakis}]{rostami2017progressive}
\bibinfo{author}{Rostami, S.}, \bibinfo{author}{Neri, F.},
  \bibinfo{author}{Epitropakis, M.}, \bibinfo{year}{2017}.
\newblock \bibinfo{title}{Progressive preference articulation for decision
  making in multi-objective optimisation problems}.
\newblock \bibinfo{journal}{Integrated Computer-Aided Engineering}
  \bibinfo{volume}{24}, \bibinfo{pages}{315--335}.
\bibitem[{Schulze(2017)}]{schulze2017new}
\bibinfo{author}{Schulze, B.}, \bibinfo{year}{2017}.
\newblock \bibinfo{title}{New perspectives on multi-objective knapsack
  problems}.
\newblock Ph.D. thesis. Dissertation, Wuppertal, Universit{\"a}t Wuppertal,
  2017.
\bibitem[{Schulze et~al.(2017)Schulze, Paquete, Klamroth and
  Figueira}]{schulze2017soft}
\bibinfo{author}{Schulze, B.}, \bibinfo{author}{Paquete, L.},
  \bibinfo{author}{Klamroth, K.}, \bibinfo{author}{Figueira, J.R.},
  \bibinfo{year}{2017}.
\newblock \bibinfo{title}{Bi-dimensional knapsack problems with one soft
  constraint}.
\newblock \bibinfo{journal}{Comput. Oper. Res.} \bibinfo{volume}{78},
  \bibinfo{pages}{15--26}.
\newblock \DOIprefix\doi{10.1016/j.cor.2016.07.012}.
\bibitem[{Segura et~al.(2016)Segura, Coello, Miranda and Le{
  \'o}n}]{segura2016using}
\bibinfo{author}{Segura, C.}, \bibinfo{author}{Coello, C.A.C.},
  \bibinfo{author}{Miranda, G.}, \bibinfo{author}{Le{ \'o}n, C.},
  \bibinfo{year}{2016}.
\newblock \bibinfo{title}{Using multi-objective evolutionary algorithms for
  single-objective constrained and unconstrained optimization}.
\newblock \bibinfo{journal}{Annals of Operations Research}
  \bibinfo{volume}{240}, \bibinfo{pages}{217--250}.
\bibitem[{Shao and Ehrgott(2014)}]{shao2014objective}
\bibinfo{author}{Shao, L.}, \bibinfo{author}{Ehrgott, M.},
  \bibinfo{year}{2014}.
\newblock \bibinfo{title}{An objective space cut and bound algorithm for convex
  multiplicative programmes}.
\newblock \bibinfo{journal}{Journal of Global Optimization}
  \bibinfo{volume}{58}, \bibinfo{pages}{711--728}.
\bibitem[{Shao and Ehrgott(2016)}]{shao2016primal}
\bibinfo{author}{Shao, L.}, \bibinfo{author}{Ehrgott, M.},
  \bibinfo{year}{2016}.
\newblock \bibinfo{title}{Primal and dual multi-objective linear programming
  algorithms for linear multiplicative programmes}.
\newblock \bibinfo{journal}{Optimization} \bibinfo{volume}{65},
  \bibinfo{pages}{415--431}.
\bibitem[{Wang et~al.(2017)Wang, Olhofer and Jin}]{wang2017}
\bibinfo{author}{Wang, H.}, \bibinfo{author}{Olhofer, M.},
  \bibinfo{author}{Jin, Y.}, \bibinfo{year}{2017}.
\newblock \bibinfo{title}{A mini-review on preference modeling and articulation
  in multi-objective optimization: current status and challenges}.
\newblock \bibinfo{journal}{Complex \& Intelligent Systems}
  \bibinfo{volume}{3}, \bibinfo{pages}{233--245}.
\bibitem[{Whittle(1988)}]{whittle1988restless}
\bibinfo{author}{Whittle, P.}, \bibinfo{year}{1988}.
\newblock \bibinfo{title}{Restless bandits: Activity allocation in a changing
  world}.
\newblock \bibinfo{journal}{Journal of applied probability}
  \bibinfo{volume}{25}, \bibinfo{pages}{287--298}.
\bibitem[{Yu et~al.(2025)Yu, Ma, Wang, Du, Du and Jin}]{yu2025towards}
\bibinfo{author}{Yu, G.}, \bibinfo{author}{Ma, L.}, \bibinfo{author}{Wang, X.},
  \bibinfo{author}{Du, W.}, \bibinfo{author}{Du, W.}, \bibinfo{author}{Jin,
  Y.}, \bibinfo{year}{2025}.
\newblock \bibinfo{title}{Towards fairness-aware multi-objective optimization}.
\newblock \bibinfo{journal}{Complex \& Intelligent Systems}
  \bibinfo{volume}{11}, \bibinfo{pages}{50}.
\bibitem[{Zhou et~al.(2023)Zhou, Du and Arai}]{zhou2023efficient}
\bibinfo{author}{Zhou, D.}, \bibinfo{author}{Du, J.}, \bibinfo{author}{Arai,
  S.}, \bibinfo{year}{2023}.
\newblock \bibinfo{title}{Efficient search of decision makers’ region of
  interest by using preference directions in multi-objective coevolutionary
  algorithm}.
\newblock \bibinfo{journal}{Swarm and Evolutionary Computation}
  \bibinfo{volume}{81}, \bibinfo{pages}{101349}.

\end{thebibliography}
\end{document}